\newcommand{\lMod}[1]{\leftidx{_{#1}}{\mathrm{Mod}}{}}
\newcommand{\FO}{{\mathbb H}}
\newcommand{\AT}{\mathbb{A}}
\newcommand{\YD}[1]{\leftidx{_{#1}^{#1}}{\mathcal{YD}}{}}
\DeclareMathOperator{\cdiv}{{\div \hspace*{-.2em}\raisebox{.15\height}{\scalebox{1}[.573]{$\mid$}}}}
\newtheorem{thm}{Theorem}[section]
\newtheorem{cor}[thm]{Corollary}
\newtheorem{prop}[thm]{Proposition}
\theoremstyle{definition}
\newtheorem{rem}[thm]{Remark}
\newtheorem{exa}[thm]{Example}
\newcommand{\HopfAlg}{\mathrm{HopfAlg}}
\newcommand{\HopfMon}{\mathrm{HopfMon}}
\newcommand{\HAr}{\mathbb{H}_r}
\newcommand{\HAl}{\mathbb{H}_l}
\newcommand{\HAri}{\mathbb{H}_r^{-1}}
\newcommand{\HAli}{\mathbb{H}_l^{-1}}% \stackrel{-1}{\phantom{.}}}}
\newcommand{\co}{\colon}
\newcommand{\id}{\mathrm{id}}
\newcommand{\kk}{\Bbbk}
\newcommand{\kt}{$\Bbbk$\nobreakdash-\hspace{0pt}}
\newcommand{\opp}{\mathrm{op}}
\newcommand{\cc}{\mathcal{C}}
\newcommand{\bb}{\mathcal{B}}
\newcommand{\dd}{\mathcal{D}}
\newcommand{\uu}{\mathcal{U}}
\newcommand{\zz}{\mathcal{Z}}
\newcommand{\rmod}[2]{\leftidx{}{#1}{_{#2}}}
\newcommand{\lmod}[2]{\leftidx{_{#2}}{#1}{}}
\newcommand{\Rt}{$\mathrm{R}$\nobreakdash-\hspace{0pt}}
\newcommand{\ti}{\mbox{-}\,}
\newcommand{\un}{\mathbb{1}}
\newcommand{\cp}{\rtimes}
\newcommand{\Ob}{\mathrm{Ob}}
\newcommand{\Vect}{{\mathrm{Vect}}}
\newcommand{\vect}{\mathrm{vect}}
\newcommand{\lev}{\mathrm{ev}}
\newcommand{\rev}{\widetilde{\mathrm{ev}}}
\newcommand{\lcoev}{\mathrm{coev}}
\newcommand{\rcoev}{\widetilde{\mathrm{coev}}}
\newcommand{\ldual}[1]{\leftidx{^\vee}{\!#1}{}}
\newcommand{\rdual}[1]{{#1}^\vee}
\newcommand{\adjunct}[2]{\!\!\raisebox{.6ex}{\xymatrix{\ar@/^.4pc/[r]^{#1}  &  \ar@/^.4pc/[l]^{#2}}}\!\!}
\newcommand{\rsdraw}[3]{\raisebox{-#1\height}{\scalebox{#2}{\includegraphics{#3.eps}}}}
\providecommand{\bysame}{\leavevmode\hbox to3em{\hrulefill}\thinspace}
\newcommand{\mo}[2]{{#2}^{#1}}
\begin{document}
\title{The doubles of a braided Hopf algebra}
\author[A. Brugui\`eres]{Alain Brugui\`eres}
\author[A. Virelizier]{Alexis Virelizier}
\email{bruguier@math.univ-montp2.fr \and virelizi@math.univ-montp2.fr}
\subjclass[2010]{16T05,18C15,18D10}

\date{\today}

\begin{abstract} Let $A$ be a Hopf algebra in a braided rigid category $\bb$. In the case $\bb$ admits a coend $C$, which is a Hopf algebra in $\bb$, we defined in 2008 the double $D(A)=A \otimes \ldual{A} \otimes C$ of $A$, which is a quasitriangular Hopf algebra in $\bb$ whose category of modules is isomorphic to the center of the category of $A$\ti modules as a braided category.
Here, quasitriangular means endowed with an \Rt matrix (our notion of \Rt matrix for a Hopf algebra in $\bb$ involves the coend $C$ of $\bb$). In general, i.e.\@ when $\bb$ does not necessarily admit a coend, we construct a quasitriangular Hopf monad $d_A$ on the center $\zz(\bb)$ of $\bb$ whose category of modules is isomorphic to the center of the category of $A$\ti modules as a braided category. As an endofunctor of $\zz(\bb)$, $d_A$ it is given by $X \mapsto X \otimes A \otimes \ldual{A}$. We prove that the Hopf monad $d_A$ may not be representable by a Hopf algebra. If $\bb$ has a coend $C$, then $D(A)$ is the cross product of the Hopf monad $d_A$ by~$C$. Equivalently, $d_A$ is the cross quotient of $D(A)$ by $C$.
\end{abstract}
\maketitle

\setcounter{tocdepth}{1} \tableofcontents

\section*{Introduction}

The notion of Hopf algebra extends naturally to the context of braided categories. Hopf algebras in braided categories, also called braided Hopf algebras, have been studied by many authors and it has been shown that several aspects of the theory of Hopf algebras can be extended to this setting. This paper is devoted to the study of the double of a braided Hopf algebra.

Let $A$ be a Hopf algebra in a braided rigid category $\bb$. Assume first that $\bb$ admits a coend $C$. Then $C$ is a Hopf algebra in $\bb$ such that the center $\zz(\bb)$ of $\bb$ is isomorphic to the category of right $C$-modules in $\bb$:
$$
\zz(\bb) \simeq \bb_{C}.
$$
In \cite{BV3}, we defined the notion of an \Rt matrix for $A$, which is a morphism $$\mathfrak{r}\co C \otimes C \to A \otimes A$$ satisfying certain axioms generalizing those of Drinfeld. These \Rt matrixes are in one-to-one correspondence with braidings on the category of $A$\ti modules.
This notion of \Rt matrix is different from that previously introduced by Majid in \cite{Maj3}, which did not involve the coend, and did not have this property. We also defined the double $D(A)$ of $A$. As an object, $D(A)= A \otimes \ldual{A} \otimes C$, where $\ldual{A}$ is  the left dual of $A$. The double $D(A)$ is a Hopf algebra in $\bb$ which is quasitriangular (i.e., endowed with an \Rt matrix) and whose category of modules is isomorphic to the categorical center of the category of right $A$\ti modules in $\bb$ as a braided category:
$$
\zz(\bb_A) \simeq \bb_{D(A)}.
$$

What happens when $\bb$ does not necessarily admit a coend?
There is a canonical strict monoidal functor
$
\uu \co \zz(\bb_A) \to \zz(\bb)
$
from the center of the category of right $A$\ti modules to the center $\zz(\bb)$ of $\bb$. We prove that this functor is monadic. As an endofunctor of $\zz(\bb)$, the associated monad $d_A$ is given by $X \mapsto X \otimes A \otimes \ldual{A}$. The monad $d_A$ is a quasitriangular Hopf monad (described explicitly) whose category of modules is isomorphic to the center of the category of $A$\ti modules as a braided category:
$$
\zz(\bb_A) \simeq \zz(\bb)^{d_A}.
$$
Recall that Hopf monads, which were introduced in \cite{BV2} and further studied in \cite{BLV}, are algebraic objects which generalize Hopf algebras in braided categories to the setting of monoidal categories. We call $d_A$ the \emph{central double of $A$}.

The central double $d_A$ of $A$ is not in general representable by a Hopf algebra, and so $\zz(\bb_A)$ cannot be described as a category of modules over some Hopf algebra in $\zz(\bb)$. To prove this,
we show that the center of the category of $A$\ti modules can be described as a category of Yetter-Drinfeld modules for $A$, viewed as a Hopf algebra in $\zz(\bb)$. (Actually for this result $\bb$ needs not be braided, if we assume that $A$ is a Hopf algebra in the center of $\bb$).

When $\bb$ has a coend $C$, what is the relationship between the double $D(A)$ and the central double $d_A$ of $A$?
Recall that the notions of cross-product and cross-quotient of Hopf monads were introduced in \cite{BV3} and \cite{BLV} respectively. Viewing the Hopf algebras $D(A)$ and $C$ as Hopf monads, we prove that $D(A)$ is the cross product of $d_A$ by $C$ and so that $d_A$ is the cross quotient of $D(A)$ by $C$:
$$
D(A)=d_A \rtimes C \quad \text{and} \quad d_A=D(A)\cdiv C.
$$
Note that this is an illustration of the fact the cross-quotient of two Hopf monads representable by Hopf algebras is not always representable by a Hopf algebra.

This paper is organized as follows. In Section~\ref{sect-prelims}, we review several facts about monoidal categories, braided categories, the center construction, and coends. In Section~\ref{sect-HAs}, we recall the definition of \Rt matrices and the construction of the double of a braided Hopf algebra from \cite{BV3}. Using Yetter-Drinfeld modules, we prove that the center of category of modules may not be a category of modules over a Hopf algebra in the center.  In Section~\ref{sect-Hopf-Monads-resume}, we recall the definition and some properties of Hopf monads. Section~\ref{sect-central-double} is devoted to the construction of the central double of a braided Hopf algebra. Finally, in Section~\ref{sect-cp-cq}, we study the relationship between the double and the central double of a braided Hopf algebra via cross-product and cross-quotient of Hopf monads.

\section{Preliminaries on categories}\label{sect-prelims}

%If $\cc$ is a category, we denote by $\Ob(\cc)$ the class of objects of $\cc$.

\subsection{Monoidal categories and monoidal functors}\label{sect-monofunctor}
Given an object $X$ of a monoidal category $\cc$, we denote by $X \otimes ?$ the endofunctor of $\cc$ defined on objects by $Y \mapsto X \otimes Y$ and on morphisms by $f \mapsto \id_X \otimes f$. Similarly one defines the endofunctor $? \otimes X$ of $\cc$.

Let $(\cc,\otimes,\un)$ and $(\dd, \otimes, \un)$ be two monoidal categories.
A \emph{monoidal functor} from $\cc$ to $\dd$ is a triple
$(F,F_2,F_0)$, where $F\co \cc \to \dd$ is a functor, $F_2\co
F\otimes F \to F \otimes$ is a natural transformation, and $F_0\co\un
\to F(\un)$ is a morphism in~$\dd$, such that:
\begin{align*}
& F_2(X,Y \otimes Z) (\id_{F(X)} \otimes F_2(Y,Z))= F_2(X \otimes Y, Z)(F_2(X,Y) \otimes \id_{F(Z)}) ;\\
& F_2(X,\un)(\id_{F(X)} \otimes F_0)=\id_{F(X)}=F_2(\un,X)(F_0
\otimes \id_{F(X)}) ;
\end{align*}
for all objects $X,Y,Z$ of $\cc$.

A monoidal functor $(F,F_2,F_0)$ is said to be \emph{strong} (resp.\@ \emph{strict}) if $F_2$ and $F_0$ are
isomorphisms (resp.\@ identities).

A natural transformation $\varphi\co F \to G$ between monoidal functors is \emph{monoidal}
if it satisfies:
\begin{equation*}
\varphi_{X \otimes Y} F_2(X,Y)= G_2(X,Y) (\varphi_X \otimes
\varphi_Y) \quad \text{and} \quad G_0=\varphi_\un F_0.
\end{equation*}

%We denote by $\MonCat$ the category of small monoidal categories, morphisms being strong monoidal functors.

\subsection{Graphical conventions} We represent morphisms in a category by diagrams to be read from bottom to top.   Thus we draw the identity $\id_X$ of an object $X$, a morphism $f\co X \to Y$, and its composition with a morphism $g\co Y \to Z$  as follows:
\begin{center}
\psfrag{X}[Bc][Bc]{\scalebox{.8}{$X$}} \psfrag{Y}[Bc][Bc]{\scalebox{.8}{$Y$}} \psfrag{h}[Bc][Bc]{\scalebox{.8}{$f$}} \psfrag{g}[Bc][Bc]{\scalebox{.8}{$g$}}
\psfrag{Z}[Bc][Bc]{\scalebox{.8}{$Z$}} $\id_X=$ \rsdraw{.45}{.9}{identitymorph}\,,\quad $f=$ \rsdraw{.45}{.9}{morphism} ,\quad \text{and} \quad $gf=$ \rsdraw{.45}{.9}{morphismcompo}\,.
\end{center}
In a monoidal category, we represent the monoidal product of two morphisms $f\co X \to Y$ and $g \co U \to V$ by juxtaposition:
\begin{center}
\psfrag{X}[Bc][Bc]{\scalebox{.8}{$X$}} \psfrag{h}[Bc][Bc]{\scalebox{.8}{$f$}}
\psfrag{Y}[Bc][Bc]{\scalebox{.8}{$Y$}}  $f\otimes g=$ \rsdraw{.45}{.9}{morphism} \psfrag{X}[Bc][Bc]{\scalebox{.8}{$U$}} \psfrag{g}[Bc][Bc]{\scalebox{.8}{$g$}}
\psfrag{Y}[Bc][Bc]{\scalebox{.8}{$V$}} \rsdraw{.45}{.9}{morphism3}\,.
\end{center}

\subsection{Duals and rigid categories}
Let $\cc$ be a monoidal category. Recall that a \emph{left dual} of an object $X$ of $\cc$ is an object $\ldual{X}$ of $\cc$ endowed with morphisms $\lev_X\co \ldual{X} \otimes X \to \un$ (the \emph{left evaluation}) and  $\lcoev_X\co \un \to X \otimes \ldual{X}$
(the \emph{left coevaluation}) such that
\begin{equation*}
(\lev_X \otimes \id_{\ldual{X}})(\id_{\ldual{X}} \otimes \lcoev_X)=\id_{\ldual{X}} \quad \text{and} \quad (\id_X \otimes \lev_X)(\lcoev_X \otimes \id_X)=\id_X.
\end{equation*}
Likewise, a \emph{right dual} of an object $X$ of $\cc$ is an object $\rdual{X}$ of $\cc$ endowed with morphisms $\rev_X\co X \otimes \rdual{X} \to \un$ (the \emph{right evaluation}) and  $\rcoev_X\co \un \to  \rdual{X} \otimes X$
(the \emph{right coevaluation}) such that
\begin{equation*}
(\rev_X \otimes \id_X)(\id_X \otimes \rcoev_X)=\id_X \quad \text{and} \quad (\id_{\rdual{X}} \otimes \rev_X)(\rcoev_X \otimes \id_{\rdual{X}})=\id_{\rdual{X}}.
\end{equation*}
Left and right duals, if they exist, are unique up to unique isomorphisms preserving the evaluation and coevaluation morphisms.

A \emph{rigid category} is a monoidal category where every objects admits both a left dual and a right dual.
%A \emph{left rigid} (resp.\@ \emph{right rigid}, resp.\@ \emph{rigid}) category is a monoidal category
%for which every object admits a left dual (resp.\@ a right dual, resp.\@ both a left and a right dual).
The duality morphisms of a rigid category are depicted as:
\begin{center}
\psfrag{C}[Bc][Bc]{\scalebox{.8}{$X$}} \psfrag{A}[Bc][Bc]{\scalebox{.8}{$\ldual{X}$}} $\lev_X=$ \rsdraw{.45}{.9}{eval}\,,\quad
\psfrag{A}[Bc][Bc]{\scalebox{.8}{$X$}} \psfrag{C}[Bc][Bc]{\scalebox{.8}{$\ldual{X}$}} $\lcoev_X=$ \rsdraw{.45}{.9}{coeval}\,,\quad
\psfrag{A}[Bc][Bc]{\scalebox{.8}{$X$}} \psfrag{C}[Bc][Bc]{\scalebox{.8}{$\rdual{X}$}} $\rev_X=$ \rsdraw{.45}{.9}{eval}\,,\quad \text{and} \quad
\psfrag{C}[Bc][Bc]{\scalebox{.8}{$X$}} \psfrag{A}[Bl][Bl]{\scalebox{.8}{$\rdual{X}$}} $\rcoev_X=$ \rsdraw{.45}{.9}{coeval}\,.
\end{center}

\subsection{Braided categories}
A \emph{braiding} of a monoidal category $\bb$ is a natural isomorphism
$ \tau=\{\tau_{X,Y} \co  X \otimes Y\to Y \otimes X\}_{X,Y \in
\bb} $ such that
\begin{equation*}
\tau_{X, Y\otimes Z}=(\id_Y \otimes \tau_{X, Z})(\tau_{X, Y} \otimes \id_Z) \quad \text{and} \quad
\tau_{X\otimes Y,Z}=(\tau_{X,Z} \otimes \id_Y)(\id_X \otimes \tau_{Y,Z})
\end{equation*}
for all $X,Y,Z\in\Ob(\bb)$. These conditions imply that $\tau_{X,\un}=\tau_{\un,X}=\id_X$.

A \emph{braided category} is a monoidal category endowed with a braiding.

The braiding $\tau$ of a braided category, and its inverse, are depicted as
\begin{center}
\psfrag{X}[Bc][Bc]{\scalebox{.8}{$X$}} \psfrag{Y}[Bc][Bc]{\scalebox{.8}{$Y$}} $\tau_{X,Y}=\,$\rsdraw{.45}{.9}{braiding} \quad \text{and} \quad $\tau^{-1}_{Y,X}=\,$\rsdraw{.45}{.9}{braidinginv}.
\end{center}

If $\bb$ is a braided category with braiding $\tau$, then the \emph{mirror of $\bb$} is the  braided category which coincides with  $\bb$ as a monoidal category and equipped with the braiding  $\overline{\tau}$ defined by
$\overline{\tau}_{X,Y}=\tau^{-1}_{Y,X}$.

\subsection{The center of a  monoidal category}\label{sect-centerusual}
Let $\cc$ be a monoidal category. A \emph{half braiding} of $\cc$ is
a pair $({{A}},\sigma)$, where ${{A}}$ is an object of $\cc$ and
\begin{equation*}
\sigma=\{\sigma_X \co  {{A}} \otimes X\to X \otimes {{A}}\}_{X \in \cc}
\end{equation*}
is a natural isomorphism such that
 \begin{equation}%\label{axiom-half-braiding}
 \sigma_{X \otimes Y}=(\id_X \otimes
\sigma_Y)(\sigma_X \otimes \id_Y)
\end{equation} for all
$X,Y\in\Ob(\cc)$. This implies that $\sigma_\un=\id_{{A}}$.

The \emph{center of $\cc$} is the braided category $\zz(\cc)$
defined as follows. The objects of~$\zz(\cc)$ are half braidings of
$\cc$. A morphism $({{A}},\sigma)\to ({{A}}',\sigma')$ in $\zz(\cc)$ is a
morphism $f \co {{A}} \to {{A}}'$ in $\cc$ such that $(\id_X \otimes
f)\sigma_X=\sigma'_X(f \otimes \id_X)$ for any object $X$ of $\cc$. The
  unit object of $\zz(\cc)$ is $\un_{\zz(\cc)}=(\un,\{\id_X\}_{X \in
\cc})$ and the monoidal product is
\begin{equation*}
({{A}},\sigma) \otimes ({{B}}, \rho)=\bigl({{A}} \otimes {{B}},(\sigma \otimes \id_{{B}})(\id_{{A}} \otimes \rho) \bigr).
\end{equation*}
The braiding $\tau$ in $\zz(\cc)$ is defined by
$$
\tau_{({{A}},\sigma),({{B}}, \rho)}=\sigma_{{{B}}}\co ({{A}},\sigma) \otimes
({{B}}, \rho) \to ({{B}}, \rho) \otimes ({{A}},\sigma).
$$
If $\cc$ is rigid, then so is $\zz(\cc)$.

The {\it forgetful functor} $\uu\co\zz(\cc)\to \cc$,
$({{A}}, \sigma) \mapsto {{A}}$, is strict monoidal.

If $\bb$ is a braided category, then its braiding $\tau$ defines a fully faithful braided functor $$\left\{\begin{array}{ccl} \bb & \to &\zz(\bb)\\ X & \mapsto &(X,\tau_{X,-})\end{array} \right.$$
which is a monoidal section of the forgetful functor $\zz(\bb) \to \bb$.

\subsection{Coends}\label{sect-coend}
Let $\cc$ and $\dd$ be categories. A \emph{dinatural
transformation} from  a functor $F\co \dd^\opp \times \dd \to \cc$
to an object $A$ of $\cc$  is a family of morphisms in~$\cc$
$$
d=\{d_Y \co F(Y,Y) \to A\}_{Y \in \dd}
$$
such that for every morphism $f\co X \to Y$ in~$\dd$, we have
$$
d_X F(f, \id_X)=d_Y F(\id_Y,f)\colon F(Y,X)\to A.
$$
%The {\it composition}  of such a $d$ with a morphism $\phi\co A\to B$ in $\cc$
%is the dinatural transformation $ \phi\circ d= \{\phi \circ d_X \co
%F(Y,Y) \to B\}_{Y \in \dd}$ from $F$ to $B$.
A \emph{coend} of~$F$
is a pair $(C,\rho)$ consisting  in an object $C$ of $\cc$ and a
dinatural transformation $\rho$ from $F$ to $C$ satisfying the
following universality condition:  for each  dinatural transformation
$d$ from $F$ to an object $c$ of $\cc$, there exists a unique morphism $\tilde{d} : C \to c$ such that
$d_Y = \tilde{d} \rho_Y$ for any $Y$ in $\dd$.
Thus if $F$ has a
coend $(C,\rho)$, then it is unique up to unique isomorphism. One
writes $C= \int^{Y \in \dd}F(Y,Y)$. For more details on coends, see \cite{ML1}.

\section{Hopf algebras in braided categories}\label{sect-HAs}

\subsection{Hopf algebras}
Let $\cc$ be a monoidal category. Recall that an \emph{algebra in $\cc$} is an object $A$ of $\cc$ endowed with morphisms $m\co A \otimes A \to A$ (the product) and $u\co \un \to A$ (the unit) such that
\begin{equation*}
m(m \otimes \id_A)=m(\id_A \otimes m) \quad \text{and} \quad m(\id_A \otimes u)=\id_A=m(u \otimes \id_A).
\end{equation*}
A \emph{coalgebra in $\cc$} is an object $C$ of $\cc$ endowed with morphisms $\Delta\co C \to C \otimes C$ (the coproduct) and $\varepsilon\co C \to \un$ (the counit) such that
\begin{equation*}
(\Delta \otimes \id_C)\Delta=(\id_C \otimes \Delta)\Delta \quad \text{and} \quad (\id_C \otimes \varepsilon)\Delta=\id_C=(\varepsilon \otimes \id_C)\Delta.
\end{equation*}

Let $\bb$ be a braided category, with braiding
$\tau$. A \emph{bialgebra in $\bb$} is an object $A$ of~$\bb$ endowed with an algebra structure (in $\bb$) and a coalgebra structure (in~$\bb$) such that its coproduct $\Delta$ and counit $\varepsilon$ are algebra morphisms (or equivalently, such that it product $m$ and unit $u$ are coalgebra morphisms), that is,
\begin{align*}
\Delta m&=(m \otimes m)(\id_A \otimes \tau_{A,A} \otimes \id_A)(\Delta \otimes \Delta),  & \Delta u&=u \otimes u, \\  \varepsilon m&=\varepsilon \otimes \varepsilon, & \varepsilon u&=\id_\un.
\end{align*}

An \emph{antipode} for a bialgebra $A$ is a morphism $S\co A \to A$ in $\bb$ such that
\begin{equation*}
m(S \otimes \id_A)\Delta=u \varepsilon=m(\id_A \otimes S)\Delta.
\end{equation*}
If it exists, an antipode is unique. A \emph{Hopf algebra in $\bb$} is a bialgebra in $\bb$ which admits an invertible antipode.

Given a Hopf algebra $A$ in a braided category, we depict its product $m$, unit $u$,
coproduct $\Delta$, counit $\varepsilon$, antipode $S$, and $S^{-1}$ as follows:
\begin{center}
\psfrag{A}[Bc][Bc]{\scalebox{.8}{$A$}} $m=$\rsdraw{.45}{.9}{mA}, \quad $u=$\rsdraw{.45}{.9}{uA}, \quad
$\Delta=$\rsdraw{.45}{.9}{cpA}, \quad $\varepsilon=$\rsdraw{.45}{.9}{epsA}, \quad $S=\,\rsdraw{.45}{.9}{antipA}$\,,
\quad $S^{-1}=\,\rsdraw{.45}{.9}{antipinvA}$\,.
\end{center}

\begin{rem}\label{rem-HA-fusion}
Let $A$ be a bialgebra in a braided category $\bb$.
Then $A$ is a Hopf algebra if and only if its left
\emph{left fusion operator}
\begin{center}
$\HAl=(\id_A \otimes m)(\Delta \otimes \id_A) = \psfrag{A}[Bc][Bc]{\scalebox{.8}{$A$}} \rsdraw{.45}{.9}{HAl}  \co A\otimes A \to A \otimes A
$
\end{center}
and its \emph{right fusion operator}
\begin{center}
$\HAr=(m \otimes \id_A) (\id_A \otimes \tau_{A,A})(\Delta \otimes \id_A) = \psfrag{A}[Bc][Bc]{\scalebox{.8}{$A$}} \psfrag{X}[Bc][Bc]{\scalebox{1}{$\tau_{A,A}$}} \rsdraw{.45}{.9}{HrA}\co A\otimes A \to A \otimes A
$
\end{center}
are invertible. If such is the case,  the inverses of the fusion operators and the antipode $S$ of $A$ and its inverse are related by
$$
\HAli= \psfrag{A}[Bc][Bc]{\scalebox{.8}{$A$}} \rsdraw{.45}{.9}{HlAinv},\qquad \HAri= \psfrag{A}[Bc][Bc]{\scalebox{.8}{$A$}} \rsdraw{.45}{.9}{HrAinv}\,, \qquad
 S= \,\psfrag{A}[Bc][Bc]{\scalebox{.8}{$A$}} \psfrag{B}[Bc][Bc]{\scalebox{1}{$\HAli$}} \rsdraw{.45}{.9}{SA2}\,,
 \qquad S^{-1}= \,\psfrag{A}[Bc][Bc]{\scalebox{.8}{$A$}} \psfrag{B}[Bc][Bc]{\scalebox{1}{$\HAri$}} \rsdraw{.45}{.9}{SA2inv}\,.
$$
\end{rem}

\subsection{Modules in categories}\label{sect-modulcatclassicHA}
Let $(A,m,u)$ be an algebra in a monoidal category~$\cc$.
A \emph{left $A$\ti module} (in $\cc$) is a pair~$(M,r)$, where $M$ is an object of $\cc$ and $r\co A \otimes M \to M$ is a morphism in $\cc$, such that
\begin{equation*}
r(m \otimes \id_M)=r(\id_A \otimes r) \quad \text{and} \quad r(u \otimes \id_M)=\id_M.
\end{equation*}
An \emph{$A$\ti linear morphism} between two left $A$\ti modules $(M,r)$ and $(N,s)$ is a morphism $f\co M \to N$ such that $fr=s(\id_A \otimes f)$. Hence the category $\lmod{\cc}{A}$  of left $A$\ti modules. Likewise, one defines the category $\rmod{\cc}{A}$ of right $A$\ti modules.

Let $A$ be a bialgebra in a braided category $\bb$.  Then the category $\lmod{\bb}{A}$ is monoidal, with unit object $(\un,\varepsilon)$ and monoidal product
\begin{equation*}
(M,r) \otimes (N,s)= (r \otimes s)(\id_A \otimes \tau_{A,M} \otimes \id_N)(\Delta \otimes \id_{M \otimes N}),
\end{equation*}
where $\Delta$ and $\epsilon$ are the coproduct and counit of $A$, and $\tau$ is the braiding of $\bb$.
Likewise the category $\rmod{\bb}{A}$ is monoidal, with unit object $(\un,\varepsilon)$ and monoidal product:
\begin{equation*}
(M,r) \otimes (N,s)= (r \otimes s)(\id_M \otimes \tau_{N,A} \otimes \id_A)(\Delta \otimes \id_{M \otimes N}).
\end{equation*}

Assume $\bb$ is rigid. Then $\lmod{\bb}{A}$ is rigid if and only if $\rmod{\bb}{A}$ is rigid, if and only if $A$ is a Hopf algebra.
If $A$ is a Hopf algebra, with antipode~$S$, then the duals of a left $A$-module $(M,r)$ are:
\begin{align*}
&\ldual{(M,r)}=\bigl(\ldual{M}, (\lev_M \otimes \id_{\ldual{M}}) (\id_{\ldual{M}} \otimes r(S \otimes \id_M)\otimes \id_{\ldual{M}})(\tau_{A,\ldual{M}} \otimes \lcoev_M)\bigr),\\
&\rdual{(M,r)}=\bigl(\rdual{M},  (\id_{\rdual{M}} \otimes \rev_M)(\id_{\rdual{M}} \otimes r\tau^{-1}_{A,M} \otimes \id_{\rdual{M}})(\rcoev_M \otimes S^{-1} \otimes \id_{\rdual{M}})\bigr),
\end{align*}
and the duals of a right $A$-module $(M,r)$ are:
\begin{align*}
&\ldual{(M,r)}=\bigl(\ldual{M},  (\lev_M \otimes \id_{\ldual{M}})(\id_{\ldual{M}} \otimes r\tau^{-1}_{M,A} \otimes \id_{\ldual{M}})(\id_{\ldual{M}} \otimes S^{-1} \otimes \lcoev_M) \bigr),\\
&\rdual{(M,r)}=\bigl(\rdual{M}, (\id_{\rdual{M}} \otimes \rev_M) (\id_{\rdual{M}} \otimes r(\id_M \otimes S) \otimes \id_{\rdual{M}})(\rcoev_M \otimes \tau_{\rdual{M},A}) \bigr).
\end{align*}

\begin{rem}\label{rem-braidleftright}
Let $A$ be a Hopf algebra in a braided category $\bb$, with braiding~$\tau$. The functor $F_A\co \lmod{\bb}{A} \to \rmod{\bb}{A}$, defined by $F_A(M,r)=\bigl(M,r \tau_{M,A}(\id_M \otimes S)\bigr)$ and $F_A(f)=f$, gives rise to a strong monoidal isomorphism of categories:
$$F_A=(F_A,\tau, \un) \co (\lmod{\bb}{A})^{\otimes\opp} \to \rmod{\bb}{A}.$$
Therefore braidings on $\lmod{\bb}{A}$ are in bijection with braidings on $\rmod{\bb}{A}$. More precisely, if $c$ is a braiding on  $\rmod{\bb}{A}$, then:
$$c'_{(M,r),(N,s)}=\tau_{M,N}\,c_{F_A(N,s),F_A(M,r)}\,\tau^{-1}_{N,M}$$ is a braiding on $\lmod{\bb}{A}$ (making $F_A$ braided), and the correspondence $c \mapsto c'$ is bijective.
\end{rem}

\subsection{The coend of a braided rigid category}\label{sect-coend-category}
Let $\bb$ be braided rigid category. The coend
\begin{equation*}
C=\int^{Y \in \bb} \leftidx{^\vee}{Y}{} \otimes Y,
\end{equation*}
if it exists, is called the \emph{coend of  $\bb$}.

Assume $\bb$ has a coend $C$ and denote by
$i_Y\co \leftidx{^\vee}{Y}{}  \otimes Y \to C$ the corresponding universal dinatural transformation. The
\emph{universal coaction} of $C$ on the objects of~$\bb$ is the natural transformation $\delta$ defined by
\begin{equation*}
\delta_Y=(\id_Y \otimes i_Y)(\lcoev_Y \otimes \id_Y)\co Y \to Y \otimes C, \quad \text{depicted as} \quad \psfrag{C}[Bc][Bc]{\scalebox{.8}{$C$}} \psfrag{Y}[Bc][Bc]{\scalebox{.8}{$Y$}}\delta_Y=\rsdraw{.45}{.95}{delta-bis-C}.
\end{equation*}
As shown by Majid~\cite{Maj2}, $C$ is a Hopf algebra in $\bb$. Its coproduct $\Delta$, product $m$,  counit $\varepsilon$, unit $u$, and antipode $S$ with inverse $S^{-1}$ are characterized by the following equalities, where $X,Y\in\bb$:
\begin{gather*}
\psfrag{Y}[Bc][Bc]{\scalebox{.8}{$Y$}}
\psfrag{C}[Bc][Bc]{\scalebox{.8}{$C$}}
\psfrag{D}[cc][cc]{\scalebox{.9}{$\Delta$}}
\rsdraw{.45}{.9}{cpCT} \, = \;\, \rsdraw{.45}{.9}{cpCT-def} , \quad
\psfrag{Y}[Bc][Bc]{\scalebox{.8}{$Y$}}
\psfrag{C}[Bc][Bc]{\scalebox{.8}{$C$}}
\psfrag{D}[cc][cc]{\scalebox{1}{$\varepsilon$}}
\quad\quad\rsdraw{.45}{.9}{epsCT} \; = \; \psfrag{Y}[Bc][Bc]{\scalebox{.8}{$Y$}}\rsdraw{.45}{.9}{epsCT-def} \; , \qquad \;\;
\psfrag{Y}[Bc][Bc]{\scalebox{.8}{$Y$}}
\psfrag{X}[Bc][Bc]{\scalebox{.8}{$X$}}
\psfrag{Z}[Bc][Bc]{\scalebox{.8}{$X\otimes Y$}}
\psfrag{C}[Bc][Bc]{\scalebox{.8}{$C$}}
\psfrag{m}[cc][cc]{\scalebox{.9}{$m$}}
\rsdraw{.45}{.9}{mCT} \, = \;\, \rsdraw{.45}{.9}{mCT-def} \,,\\[.4em]
 u=\delta_\un, \qquad\;
\psfrag{a}[Bc][Bc]{\scalebox{.9}{$\lev_{Y}$}}
\psfrag{u}[Bc][Bc]{\scalebox{.9}{$\lcoev_{Y}$}}
\psfrag{Y}[Bl][Bl]{\scalebox{.8}{$Y$}}
\psfrag{C}[Bl][Bl]{\scalebox{.8}{$C$}}
\psfrag{D}[cc][cc]{\scalebox{.9}{$S$}} \rsdraw{.45}{.9}{antipCT} \; = \,
\rsdraw{.45}{.9}{antipCT-def}\, ,
\qquad
\psfrag{a}[Bc][Bc]{\scalebox{.9}{$\rev_{Y}$}}
\psfrag{u}[Bc][Bc]{\scalebox{.9}{$\rcoev_{Y}$}}
\psfrag{Y}[Bl][Bl]{\scalebox{.8}{$Y$}}
\psfrag{C}[Bl][Bl]{\scalebox{.8}{$C$}}
\psfrag{D}[cc][cc]{\scalebox{.9}{$S^{-1}$}} \rsdraw{.45}{.9}{antipinvCT} \; = \,
\rsdraw{.45}{.9}{antipinvCT-def}\,.
\end{gather*}
Furthermore, the morphism $\omega\co C \otimes C \to \un$ defined by
\begin{equation*}
\psfrag{Y}[Bl][Bl]{\scalebox{.8}{$Y$}}\psfrag{X}[Bc][Bc]{\scalebox{.8}{$X$}}
\psfrag{C}[Bl][Bl]{\scalebox{.8}{$C$}}
\psfrag{w}[cc][cc]{\scalebox{.9}{$\omega$}}
\rsdraw{.45}{.9}{wCT} \, = \;\, \rsdraw{.45}{.9}{wCT-def}
\end{equation*}
is a Hopf pairing for $C$, called the \emph{canonical pairing}. This means that
\begin{align*}
&\omega(m \otimes \id_C)=\omega (\id_C \otimes \omega \otimes \id_C)(\id_{C^{\otimes 2}} \otimes \Delta), && \omega(u
\otimes
\id_C)=\varepsilon,\\
&\omega(\id_C \otimes m)=\omega (\id_C \otimes \omega \otimes \id_C)(\Delta \otimes \id_{C^{\otimes 2}}), &&
\omega(\id_C \otimes u)=\varepsilon.
\end{align*}
These axioms imply: $\omega(S \otimes \id_C)=\omega(\id_C \otimes S)$. Moreover the canonical pairing $\omega$ satisfies the self-duality condition:    $\omega \tau_{C,C} (S \otimes
S)=\omega$.

In the following, the structural morphisms of $C$ are drawn in grey and the
Hopf pairing $w\co C \otimes C \to \un$ is depicted as:
\begin{center}
\psfrag{C}[Bc][Bc]{\scalebox{.8}{$C$}} $\omega=$\rsdraw{.45}{.9}{wC}.
\end{center}
\begin{rem}
The category $\bb$ is symmetric if and only if $\omega=\epsilon \otimes \epsilon$. Such is the case if $C=\un$.
\end{rem}

\begin{rem}
The universal coaction of the coend on itself can be expressed in terms of its Hopf algebra structure as follows:
\begin{equation*}
\delta_C=\, \psfrag{C}[Bc][Bc]{\scalebox{.8}{$C$}}\rsdraw{.45}{.95}{delta-explicit-C}=\;\rsdraw{.45}{.95}{delta-explicit-bis-C}.
\end{equation*}
\end{rem}

\begin{rem}
The coend $C$ has a canonical half braiding $\sigma=\{\sigma_X \}_{X \in \cc}$ defined by
\begin{equation*}
\sigma_C=\, \psfrag{C}[Bc][Bc]{\scalebox{.8}{$C$}}\psfrag{X}[Bc][Bc]{\scalebox{.8}{$X$}}\rsdraw{.45}{.95}{sigmaC} \,: C \otimes X\to X \otimes C.
\end{equation*}
Then  $(C,\sigma)$, endowed with the coproduct and counit of $C$, is a coalgebra in $\zz(\cc)$ which is cocommutative. Indeed its coproduct $\Delta_C$ satisfies $\sigma_C \Delta_C=\Delta_C$.
\end{rem}

\subsection{\Rt matrices: Majid's approach}
Let $A$ be a Hopf algebra in braided category $\bb$, with braiding $\tau$. In \cite{Maj3}, by extending Drinfeld's axioms, Majid
defined  an \Rt matrix for $A$ to be  a convolution-invertible morphism $\mathfrak{r}\co \un \to A \otimes A$ satisfying
\begin{center}
\psfrag{C}[Bc][Bc]{\scalebox{.8}{$C$}} \psfrag{A}[Bc][Bc]{\scalebox{.8}{$A$}}
\psfrag{r}[Bc][Bc]{\scalebox{.9}{$\mathfrak{r}$}}
\rsdraw{.45}{1}{RA-Dri-axiom1n} \, $=$ \, \rsdraw{.45}{1}{RA-Dri-axiom1}\;, \qquad
\rsdraw{.45}{1}{RA-Dri-axiom2n} $=$ \rsdraw{.45}{1}{RA-Dri-axiom2}\;, \qquad
\rsdraw{.45}{1}{RA-Dri-axiom3n} $=$ \rsdraw{.45}{1}{RA-Dri-axiom3}\,.
%\rsdraw{.45}{1}{RA-axiomu1} \, $=$ \, \rsdraw{.45}{1}{RA-axiomu2} \, $=$ \, \rsdraw{.45}{1}{RA-axiomu3} \,,
\end{center}
Here $\mathfrak{r}$ convolution-invertible means that there exists a (necessarily unique) morphism $\mathfrak{r}'\co \un \to A \otimes A$ such that
\begin{center}
\psfrag{A}[Bc][Bc]{\scalebox{.8}{$A$}}
\psfrag{r}[Bc][Bc]{\scalebox{.9}{$\mathfrak{r}$}}
\psfrag{i}[Bl][Bl]{\scalebox{.9}{$\mathfrak{r}'$}}
\rsdraw{.45}{1}{RA-Dri-inv1} \, $=$ \, \rsdraw{.45}{1}{RA-Dri-inv2} $=$ \, \rsdraw{.45}{1}{RA-Dri-inv3}\,.
\end{center}
Note that if $\bb$ is rigid, then $\mathfrak{r}$ is convolution-invertible if and only if it satisfies
\begin{center}
\psfrag{A}[Bc][Bc]{\scalebox{.8}{$A$}}
\psfrag{r}[Bc][Bc]{\scalebox{.9}{$\mathfrak{r}$}}
\rsdraw{.45}{1}{RA-Dri-axiomu1} \, $=$ \, \rsdraw{.45}{1}{RA-Dri-axiomu2} \, $=$ \, \rsdraw{.45}{1}{RA-Dri-axiomu3} \,.
\end{center}

Majid noticed that such a morphism  $\mathfrak{r}$ does not define a braiding on the category $\bb_A$ of $A$-modules braided, but  only on the full subcategory $\mathcal{O}_A$ of $\bb_A$ whose objects are right $A$-modules $(M,r)$ such that
\begin{center}
\psfrag{X}[Bc][Bc]{\scalebox{.8}{$M$}} \psfrag{A}[Bc][Bc]{\scalebox{.8}{$A$}}
\psfrag{n}[Bc][Bc]{\scalebox{.9}{$r$}}
\rsdraw{.45}{1}{OA-Dri-1} \, $=$ \, \rsdraw{.45}{1}{OA-Dri-2} \,.
\end{center}
This braiding on $\mathcal{O}_A$ is given by
\begin{equation*}
c_{(M,r),(N,s)}=\psfrag{X}[Bc][Bc]{\scalebox{.8}{$M$}} \psfrag{Y}[Bc][Bc]{\scalebox{.8}{$N$}}
\psfrag{n}[Bc][Bc]{\scalebox{.9}{$r$}}\psfrag{s}[Bc][Bc]{\scalebox{.9}{$s$}} \psfrag{r}[Bc][Bc]{\scalebox{.9}{$\mathfrak{r}$}}
\rsdraw{.45}{1}{tr-Dri-OA}\;.
\end{equation*}
Note that in general $\mathcal{O}_A \neq \bb_A$, and equality occurs only when the Hopf algebra $A$ is transparent, that is, $\tau_{A,X}=\tau_{X,A}^{-1}$
for any object $X$ of $\bb$.

\subsection{\Rt matrices revisited}
Recall that a key feature of \Rt matrices for Hopf algebras over a field is the following (see~\cite{Drin}):
if $H$ is finite-dimensional Hopf algebra $H$ over a field $\kk$, then \Rt matrices for $H$ are in natural bijection with braidings on the category of finite-dimensional $H$-modules. As noted above, this bijective correspondence is lost with Majid's definition.

In \cite{BV3}, using the theories of Hopf monads and coends, we extended the notion of an \Rt matrix to a Hopf algebra $A$ in braided rigid category admitting a coend, so as to preserve this bijective correspondence.

Let $\bb$ be a braided rigid category admitting a coend $C$, and $A$ be a Hopf algebra in $\bb$. An \emph{\Rt matrix} for $A$ is a morphism
\begin{equation*}
\mathfrak{r}\co C \otimes C \to A \otimes A
\end{equation*}
in $\bb$, which satisfies
\begin{center}
\psfrag{C}[Bc][Bc]{\scalebox{.8}{$C$}} \psfrag{A}[Bc][Bc]{\scalebox{.8}{$A$}}
\psfrag{r}[Bc][Bc]{\scalebox{.9}{$\mathfrak{r}$}}
\rsdraw{.45}{1}{RA-axiom1n} \, $=$ \, \rsdraw{.45}{1}{RA-axiom1-bisn}\;, \qquad
\rsdraw{.45}{1}{RA-axiom2n} $=$ \rsdraw{.45}{1}{RA-axiom2-bisn}\;, \qquad
\rsdraw{.45}{1}{RA-axiom3n} $=$ \rsdraw{.45}{1}{RA-axiom3-bisn}\,,\\[.8em]
\rsdraw{.45}{1}{RA-axiomu1} \, $=$ \, \rsdraw{.45}{1}{RA-axiomu2} \, $=$ \, \rsdraw{.45}{1}{RA-axiomu3} \,.
\end{center}

Note that for finite-dimensional Hopf algebras over a field $\kk$, our definition of an \Rt matrix coincides with Drinfeld's definition, as in that case $C = \kk$.

\begin{thm}[{\cite[Section 8.6]{BV3}}]\label{thm-rmat}
Any \Rt matrix $\mathfrak{r}$ for $A$ defines a braiding~$c$ on $\rmod{\bb}{A}$ as follows: for right $A$-modules $(M,r),(N,s)$,
\begin{equation*}
c_{(M,r),(N,s)}=\psfrag{X}[Bc][Bc]{\scalebox{.8}{$M$}} \psfrag{Y}[Bc][Bc]{\scalebox{.8}{$N$}}
\psfrag{n}[Bc][Bc]{\scalebox{.9}{$r$}}\psfrag{s}[Bc][Bc]{\scalebox{.9}{$s$}} \psfrag{r}[Bc][Bc]{\scalebox{.9}{$\mathfrak{r}$}}
\rsdraw{.45}{1}{trA-mod}\;.
\end{equation*}
Furthermore, the map $\mathfrak{r} \mapsto c$ is a bijection between \Rt matrices for $A$ and braidings on $\bb_A$
\end{thm}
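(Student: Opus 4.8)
The plan is to establish the two assertions in turn: that the displayed formula yields a braiding $c$ on $\rmod{\bb}{A}$ for every \Rt matrix $\mathfrak{r}$, and that $\mathfrak{r}\mapsto c$ is a bijection onto all braidings. The whole argument is carried out in the graphical calculus, the pivotal tool being the universal coaction $\delta_M\co M\to M\otimes C$ carried by every object $M$ of $\bb$, together with the characterization of the Hopf structure of $C$ in terms of the dinatural morphisms $i_Y$. The four displayed axioms on $\mathfrak{r}$ are tailored so as to correspond, one for one, to the defining properties of a braiding.

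\emph{From an \Rt matrix to a braiding.} Reading off the formula, $c_{(M,r),(N,s)}$ is the composite that coacts on $M$ and $N$ through $\delta_M,\delta_N$, feeds the two resulting copies of $C$ into $\mathfrak{r}$, lets the two outputs in $A$ act on $M$ and $N$ via $r,s$, and permutes $M$ and $N$ through the braiding $\tau$ of $\bb$. I would then verify successively: (i) \emph{naturality} of $c$ in each variable, which follows from naturality of $\tau$ and $\delta$ and the fact that module morphisms are $A$\ti linear, and uses no axiom on $\mathfrak{r}$; (ii) that each $c_{(M,r),(N,s)}$ is itself $A$\ti linear, i.e.\ a morphism of $\rmod{\bb}{A}$, which is exactly where the quasi-cocommutativity axiom (the coend analogue of Drinfeld's $\Delta^{\mathrm{op}}=\mathfrak{r}\,\Delta\,\mathfrak{r}^{-1}$) is used to commute the $A$\ti action past $\mathfrak{r}$; (iii) the two \emph{hexagon identities}, which come from the two comultiplicativity axioms---the coend versions of $(\Delta\otimes\id)(\mathfrak{r})=\mathfrak{r}_{13}\mathfrak{r}_{23}$ and $(\id\otimes\Delta)(\mathfrak{r})=\mathfrak{r}_{13}\mathfrak{r}_{12}$---where one rewrites $\delta_{M\otimes N}$ and splits or merges the $C$\ti legs using the defining equations for the coproduct $\Delta$ and product $m$ of $C$; and (iv) \emph{invertibility} of $c$, supplied by the last (convolution-type) axiom, the associated morphism $\mathfrak{r}'$ producing a two-sided inverse family.

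\emph{The bijection.} To construct the inverse map I would start from a braiding $c$ on $\rmod{\bb}{A}$ and evaluate it on the free right $A$\ti modules $F(Y)=(Y\otimes A,\id_Y\otimes m)$. For objects $Y,Z$ of $\bb$, inserting the unit $u$ of $A$ into the two free slots of $c_{F(Y),F(Z)}$ and contracting the $Y,Z$\ti legs against $\ldual Y,\ldual Z$ with the left evaluations produces a family of morphisms $\ldual Y\otimes Y\otimes \ldual Z\otimes Z\to A\otimes A$ which is dinatural in $Y$ and in $Z$ by naturality of $c$. By the universal property of $C=\int^{Y}\ldual Y\otimes Y$ this family factors through a unique morphism $\mathfrak{r}\co C\otimes C\to A\otimes A$. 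I would then check that the hexagons, $A$\ti linearity and invertibility of $c$ translate, through the Hopf-structure equations for $C$, into the three \Rt matrix axioms and the invertibility axiom. Finally I would show the two assignments are mutually inverse: reconstructing $c$ from its $\mathfrak{r}$ reproduces the original braiding---it suffices, again by universality, to compare the two on free modules---and extracting $\mathfrak{r}$ from the $c$ built out of a given $\mathfrak{r}$ returns $\mathfrak{r}$, using $\delta_Y=(\id_Y\otimes i_Y)(\lcoev_Y\otimes\id_Y)$ and the counit of $C$.

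\emph{Main obstacle.} The substantive difficulty lies in the bijection, specifically in passing coherently back and forth through the universal property of the coend: one must confirm that the probe family extracted from $c$ is genuinely dinatural, so that $\mathfrak{r}$ is well defined, and that each \Rt matrix axiom is \emph{equivalent} to---not merely a consequence of---the corresponding property of $c$. Matching the two hexagons with the two comultiplicativity axioms is the most calculation-heavy step, as it forces one to rewrite $\delta_{M\otimes N}$ together with the coproduct and product of $C$ through the dinatural morphisms $i_Y$ and the braiding, while controlling the structural equations of $C$---in particular the self-duality relation $\omega\,\tau_{C,C}(S\otimes S)=\omega$---that surface once the $C$\ti legs are manipulated. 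Once this dictionary between the Hopf structure of $C$ and the \Rt matrix axioms is in place, each of the remaining verifications reduces to a finite diagram manipulation.
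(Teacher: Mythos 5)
The paper itself contains no proof of Theorem~\ref{thm-rmat}: it is quoted from \cite[Section~8.6]{BV3}. Your outline is correct, and it is essentially a direct, Hopf-algebra-level reconstruction of the argument given there; the difference is one of packaging. In the cited source the theorem is obtained by specializing a general correspondence between braidings on the module category of a Hopf monad $T$ and \Rt matrices for $T$ (natural transformations $R_{X,Y}\co X\otimes Y\to T(Y)\otimes T(X)$) to the Hopf monad $T={?}\otimes A$ on $\bb$; the passage through the coend is then exactly your extraction step, since a natural family $X\otimes Y\to Y\otimes A\otimes X\otimes A$ is the same datum as a morphism out of $\int^{X,Y}\ldual{X}\otimes X\otimes\ldual{Y}\otimes Y\cong C\otimes C$. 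Your direct route buys concreteness at the price of redoing the hexagon/comultiplicativity bookkeeping by hand, and it is viable; to close it you should make two standard points explicit. First, the identification $\int^{X,Y}\ldual{X}\otimes X\otimes\ldual{Y}\otimes Y\cong C\otimes C$, which legitimizes factoring your two-variable dinatural family through $C\otimes C$, uses that the monoidal product preserves coends --- true here because $\bb$ is rigid, hence $X\otimes{?}$ and ${?}\otimes X$ have adjoints. Second, the reduction of the ``mutually inverse'' check to free modules needs the observation that for any right $A$\ti module $(M,r)$ the action $r\co (M\otimes A,\id_M\otimes m)\to (M,r)$ is an epimorphism in $\rmod{\bb}{A}$ (split in $\bb$ by $\id_M\otimes u$), so that naturality of a braiding determines it entirely from its components on free modules. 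With those two lemmas in place, each remaining verification is the finite diagram manipulation you describe.
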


\begin{rem}\label{rem-rmat}
\Rt matrices also encode braiding on the category $\lmod{\bb}{A}$ of left $A$-modules. Indeed, since
 braidings on $\lmod{\bb}{A}$ are in bijective correspondence with braidings on  $\rmod{\bb}{A}$ (see Remark~\ref{rem-braidleftright}), an \Rt matrix $\mathfrak{r}$ for $A$ defines a braiding $c'$ on  $\lmod{\bb}{A}$ as follows: for left $A$-modules $(M,r),(N,s)$,
\begin{equation*}
c'_{(M,r),(N,s)}=\psfrag{X}[Bc][Bc]{\scalebox{.8}{$M$}} \psfrag{Y}[Bc][Bc]{\scalebox{.8}{$N$}}
\psfrag{n}[Bc][Bc]{\scalebox{.9}{$r$}}\psfrag{s}[Bc][Bc]{\scalebox{.9}{$s$}} \psfrag{r}[Bc][Bc]{\scalebox{.9}{$\mathfrak{r}$}}
\rsdraw{.45}{1}{trA-modl}\;.
\end{equation*}
Furthermore, the map $\mathfrak{r} \mapsto c'$  is a bijection between \Rt matrices for $A$ and braidings on $\lmod{\bb}{A}$.
\end{rem}

\subsection{Quasitriangular Hopf algebras}
Let $\bb$ be a braided rigid category admitting a coend.
A \emph{quasitriangular Hopf algebra} in $\bb$ is a Hopf algebra in $\bb$ endowed with an \Rt matrix.

By Theorem~\ref{thm-rmat} and Remark~\ref{rem-rmat}, if $A$ is a quasitriangular Hopf algebra in $\bb$, then the rigid categories  $\rmod{\bb}{A}$ and $\lmod{\bb}{A}$ are braided.

\begin{rem}\label{rem-rmatmirror}
If $A$ is a quasitriangular Hopf algebra in $\bb$, then  $\lmod{\bb}{A}$ and $\rmod{\bb}{A}$ are isomorphic as braided categories. Indeed, the monoidal functor $(1_{\lmod{\bb}{A}},c', \id_{\un}) \co {\lmod{\bb}{A}}^{\otimes \opp} \to {\lmod{\bb}{A}}$ is a braided isomorphism (where $c'$ is the braiding of $\lmod{\bb}{A}$), and
by construction the monoidal isomorphism $F_A \co (\lmod{\bb}{A})^{\otimes\opp} \to \rmod{\bb}{A}$ of Remark~\ref{rem-braidleftright} is braided.
\end{rem}

\begin{exa}\label{ex-Cqt}
The coend $C$ of $\bb$ is a quasitriangular Hopf algebra in $\bb$ with \Rt matrix
\begin{equation*}
\mathfrak{r}=\, \psfrag{C}[Bc][Bc]{\scalebox{.8}{$C$}}\rsdraw{.45}{.95}{Rmat-C} \,,
\end{equation*}
and so the category $\bb_C$ of right $C$-modules is braided.
For any right $C$-module $(M,r)$ and any $C$-linear morphism $f$, set  $I(M,r)=(M,\sigma)$ and $I(f)=f$ with
\begin{equation*}
\sigma_X=\,  \psfrag{r}[Bc][Bc]{\scalebox{.8}{$r$}}\psfrag{C}[Bc][Bc]{\scalebox{.8}{$M$}} \psfrag{X}[Bc][Bc]{\scalebox{.8}{$X$}}\rsdraw{.45}{.95}{sigma-CZB} \;.
\end{equation*}
This defines a functor $I\co \bb_C \to \zz(\bb)$ which is a braided strict monoidal isomorphism.
\end{exa}

\subsection{The double of a Hopf algebra}\label{sect-DA}
Let $\bb$ be a braided rigid category admitting a coend $C$, and let $A$ be a Hopf algebra in $\bb$. Set
$$
D(A)=A \otimes \ldual{A} \otimes C
$$
and define the product $m_{D(A)}$, the unit $u_{D(A)}$, the coproduct $\Delta_{D(A)}$, the counit $\varepsilon_{D(A)}$, the antipode $S_{D(A)}$, and the \Rt matrix $\mathfrak{r}_{D(A)}$ as in Figure~\ref{fig-DA}.
\begin{figure}[t]
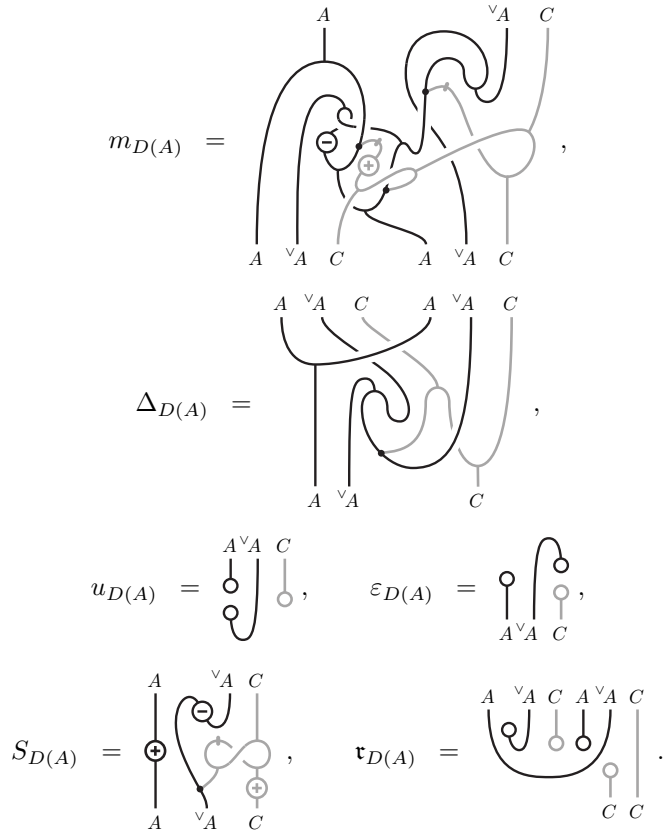

\begin{center}
\psfrag{A}[Bc][Bc]{\scalebox{.8}{$A$}}
\psfrag{B}[Bc][Bc]{\scalebox{.8}{$\ldual{A}$}}
\psfrag{D}[Br][Br]{\scalebox{.8}{$\ldual{A}$}}
\psfrag{C}[Bc][Bc]{\scalebox{.8}{$C$}}
$m_{D(A)}$ \, = \, \rsdraw{.45}{.9}{mDA}\,,  \\[1em]
\psfrag{A}[Bc][Bc]{\scalebox{.8}{$A$}}
\psfrag{B}[Bc][Bc]{\scalebox{.8}{$\ldual{A}$}}
\psfrag{F}[Br][Br]{\scalebox{.8}{$\ldual{A}$}}
\psfrag{C}[Bc][Bc]{\scalebox{.8}{$C$}}
$\Delta_{D(A)}$ \, = \, \rsdraw{.45}{.9}{cpDA} \,,\\[1em]
\psfrag{A}[Bc][Bc]{\scalebox{.8}{$A$}}
\psfrag{B}[Br][Br]{\scalebox{.8}{$\ldual{A}$}}
\psfrag{C}[Bc][Bc]{\scalebox{.8}{$C$}}
$u_{D(A)}$ \, = \, \rsdraw{.45}{.9}{uDA}\;, \qquad
$\varepsilon_{D(A)}$ \, = \, \rsdraw{.45}{.9}{epsDA}\;, \\[1em]
\psfrag{A}[Bc][Bc]{\scalebox{.8}{$A$}}
\psfrag{B}[Bc][Bc]{\scalebox{.8}{$\ldual{A}$}}
\psfrag{F}[Br][Br]{\scalebox{.8}{$\ldual{A}$}}
\psfrag{C}[Bc][Bc]{\scalebox{.8}{$C$}}
$S_{D(A)}$ \, = \, \rsdraw{.45}{.9}{antipDA} \,, \qquad
\psfrag{A}[Bc][Bc]{\scalebox{.8}{$A$}}
\psfrag{B}[Br][Br]{\scalebox{.8}{$\ldual{A}$}}
\psfrag{C}[Bc][Bc]{\scalebox{.8}{$C$}}
$\mathfrak{r}_{D(A)}$ \, = \, \rsdraw{.45}{.9}{RDA} \,.
\end{center}
\caption{Structural morphisms of the double $D(A)$ of $A$}
\label{fig-DA}
\end{figure}

\begin{thm}[{\cite[Theorem~8.13]{BV3}}]\label{thm-double-DA} In the above notation,
$D(A)=A \otimes \ldual{A} \otimes C$ is a quasitriangular Hopf algebra in $\bb$, and we have the following isomorphisms of braided categories:
\begin{equation*}
\zz(\rmod{\bb}{A}) \simeq \rmod{\bb}{{D(A)}}\simeq\lmod{\bb}{{D(A)}} \simeq \overline{\zz(\lmod{\bb}{A})}.
\end{equation*}
\end{thm}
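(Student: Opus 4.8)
The object $D(A)=A\otimes\ldual{A}\otimes C$ is the braided analogue of the Drinfeld double: when $\bb=\Vect$ and $C=\kk$ it reduces to $A\otimes\ldual{A}=A\otimes\dual{A}$, and the asserted isomorphism $\zz(\rmod{\bb}{A})\simeq\rmod{\bb}{D(A)}$ is the classical statement that the Drinfeld center of $\rmod{\Vect}{A}$ is the category of modules over the double, equivalently of Yetter--Drinfeld modules. My plan is therefore: (1) verify directly that the morphisms of Figure~\ref{fig-DA} make $D(A)$ a Hopf algebra and that $\mathfrak{r}_{D(A)}$ is an \Rt matrix, so that $D(A)$ is quasitriangular; (2) build an explicit braided monoidal isomorphism $\zz(\rmod{\bb}{A})\iso\rmod{\bb}{D(A)}$; and (3) deduce the two remaining isomorphisms from quasitriangularity together with the left/right dualities of Remarks~\ref{rem-rmatmirror} and~\ref{rem-braidleftright}. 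Part~(1) is a lengthy but routine graphical verification: $D(A)$ is assembled as a cross product of the ``double part'' $A\otimes\ldual{A}$ with the coend $C$, and each Hopf-algebra axiom is checked by sliding strands through the braiding and using that $C$ is a Hopf algebra with its canonical pairing $\omega$; quasitriangularity of $\mathfrak{r}_{D(A)}$ is checked against the defining axioms of an \Rt matrix.

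The core is step~(2). An object of $\zz(\rmod{\bb}{A})$ is a pair $\bigl((M,r),\gamma\bigr)$ with $(M,r)$ a right $A$-module and $\gamma$ a half braiding on $\rmod{\bb}{A}$. The key lemma identifies such a $\gamma$ with an action of $\ldual{A}\otimes C$ on $M$: evaluating $\gamma$ on the free modules $(X\otimes A,\id_X\otimes m)$, using naturality in $X$ and the universal property of the coend $C=\int^{X\in\bb}\ldual{X}\otimes X$, one extracts on the one hand a right $C$-action on $M$ --- equivalently, by Example~\ref{ex-Cqt}, a half braiding $\sigma$ of $M$ in $\bb$ --- and on the other hand an $\ldual{A}$-part encoding the interaction of $\gamma$ with the $A$-action. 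The $A$-linearity and hexagon axioms for $\gamma$ become exactly the relations that make $r$, this $\ldual{A}$-part and $\sigma$ combine into a single associative action of the braiding-twisted product $m_{D(A)}$ on $M$. This yields a functor $\Phi\co\zz(\rmod{\bb}{A})\to\rmod{\bb}{D(A)}$; its inverse reads off $r$, the coaction and $\sigma$ from the three tensor factors of a $D(A)$-module and rebuilds $\gamma$ by the same universal property. One then verifies that $\Phi$ is strict monoidal --- the coproduct $\Delta_{D(A)}$ is designed precisely to reproduce the tensor product of half braidings --- and braided, i.e. that the center braiding $\tau_{((M,r),\gamma),((N,s),\delta)}=\gamma_{(N,s)}$ is carried to the braiding induced on $\rmod{\bb}{D(A)}$ by $\mathfrak{r}_{D(A)}$ through Theorem~\ref{thm-rmat}.

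For step~(3), the middle isomorphism $\rmod{\bb}{D(A)}\simeq\lmod{\bb}{D(A)}$ of braided categories is immediate from Remark~\ref{rem-rmatmirror}, since $D(A)$ is quasitriangular. For the last one, apply the strong monoidal isomorphism $F_A\co(\lmod{\bb}{A})^{\otimes\opp}\iso\rmod{\bb}{A}$ of Remark~\ref{rem-braidleftright} and pass to centers: since the center of a category with reversed tensor product is the mirror of the center, one gets $\zz(\rmod{\bb}{A})\simeq\zz\bigl((\lmod{\bb}{A})^{\otimes\opp}\bigr)\simeq\overline{\zz(\lmod{\bb}{A})}$, which combined with step~(2) yields $\lmod{\bb}{D(A)}\simeq\overline{\zz(\lmod{\bb}{A})}$.

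I expect the main obstacle to be the key lemma of step~(2): correctly identifying $\gamma$ with a $D(A)$-action requires that the three pieces of structure --- the $A$-action $r$, the $\ldual{A}$-part, and the half braiding $\sigma$ of $\bb$ coming from $C$ --- assemble into one associative action for the braiding-twisted product $m_{D(A)}$, and that the resulting center braiding agree with $\mathfrak{r}_{D(A)}$. Unlike the classical case, the braiding of $\bb$ enters essentially here, which is exactly why the coend $C$ must appear as a factor of $D(A)$; keeping the naturality-plus-coend bookkeeping coherent through the monoidal and braided compatibility checks is the delicate part.
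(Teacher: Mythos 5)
Your outline is correct, but it takes a genuinely different route from the source. The paper itself does not prove Theorem~\ref{thm-double-DA}; it imports it from \cite[Theorem~8.13]{BV3}, where the argument is monadic: one shows that the strict monoidal forgetful functor $\zz(\rmod{\bb}{A})\to\bb$ factors as $\zz(\rmod{\bb}{A})\to\zz(\bb)\to\bb$ through two monadic Hopf adjunctions (this is exactly the decomposition the present paper re-derives in Theorems~\ref{thm-centraldouble} and~\ref{thm-crossDA}, giving $?\otimes D(A)=d_A\cp(?\otimes C)$), so that associativity, the bialgebra axiom, the antipode and the \Rt matrix of $D(A)$ all come for free from the general theory of centralizers and cross products of Hopf monads, together with the representability criteria of Section~\ref{sect-char-rep}. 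You instead propose to verify the axioms of Figure~\ref{fig-DA} graphically and to build the equivalence $\zz(\rmod{\bb}{A})\simeq\rmod{\bb}{D(A)}$ by hand, reconstructing a half braiding $\gamma$ from its values on free modules $(X\otimes A,\id_X\otimes m)$ and converting the resulting natural transformation $M\otimes X\to X\otimes A\otimes M$ into a single morphism $M\otimes\ldual{A}\otimes C\to M$ via rigidity and the universal property of the coend. That is the correct ``unpacked'' version of the monadic proof and it does work; what it costs is length (the associativity of $m_{D(A)}$ and the compatibility of $\Delta_{D(A)}$ with it are genuinely heavy diagrams), and what it buys is explicitness. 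Two points deserve more care than you give them: first, the ``$C$-part'' and the ``$\ldual{A}$-part'' you extract do not separately carry action structure --- only the combined morphism $M\otimes A\otimes\ldual{A}\otimes C\to M$ is associative for $m_{D(A)}$, which is precisely why $A\otimes\ldual{A}$ alone is not a Hopf algebra in $\bb$ (cf.\ Section~\ref{sect-non-rep-res}); second, your reconstruction produces $\gamma_{(N,s)}$ as a morphism, and its invertibility (required for a half braiding) must be supplied separately, e.g.\ from the invertibility of $S_{D(A)}$, exactly as in the proof of Proposition~\ref{thm-YD}. Your step~(3) matches the paper's Remarks~\ref{rem-braidleftright} and~\ref{rem-rmatmirror} and is fine.
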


\begin{rem}
When $\bb=\vect_\kk$ is the category of finite-dimensional vector
spaces over a field $\Bbbk$, whose coend is $\kk$, we recover the usual Drinfeld double
and the interpretation of its category of modules in terms of the
center. More precisely,  let $H$ be a finite-dimensional Hopf algebra
over $\Bbbk$ and $(e_i)$ be a basis of $H$ with dual basis $(e^i)$. Then $D(H)=H \otimes (H^*)^\mathrm{cop}$ is a quasitriangular Hopf algebra over $\Bbbk$, with \Rt matrix $\mathfrak{r}=\sum_i   e_i \otimes \varepsilon \otimes 1_H \otimes e_i$, such that
\begin{equation*}
 \zz\bigl((\vect_\kk)_H\bigr)\simeq (\vect_\kk)_{D(H)} \simeq \lmod{(\vect_\kk)}{D(H)} \simeq \overline{\zz\bigl(\lmod{(\vect_\kk)}{H}\bigr)}
\end{equation*}
as braided categories.
\end{rem}

\begin{rem}
The coend $C$ is nothing but the quasitriangular Hopf algebra $D(\un)$ (see Example~\ref{ex-Cqt}).
\end{rem}

\subsection{Yetter-Drinfeld modules}\label{sect-YDs}

%In general, given a bialgebra $A$ in a braided category $\bb$, there is a natural monoidal functor $$\uu \co \zz(_A\bb) \to \zz(\bb),$$ defined by $\uu((M,r),\gamma) = (M,\sigma)$, where $\sigma$ is the half-braiding of $\bb$ given by $\sigma_X = \gamma_{(X,\varepsilon \otimes \id_X)}$, $\varepsilon$ being the counit of $A$.
%
%Is there an algebraic description of the centrer of the category of $A$\ti modules  in terms of the center of $\bb$? Is the center of the category of $A$\ti modules the category of modules of a Hopf algebra in the center of $\bb$?
%
%We will show in this section that the answer to the first question is positive: $\zz(_A\bb)$ can be described as the category of Yetter-Drinfeld modules of $A$ viewed as a Hopf algebra of $\bb$, but the answer to the second question is negative. However, we will see in Section~\ref{sect-monadicity} that the center of the category of $A$\ti modules is the category of modules of a quasitriangular Hopf monad on the center of $\bb$.

Let $A$ be a bialgebra in a braided category $\bb$ with braiding $\tau$.
A  \emph{(left-left) Yetter-Drinfeld module} of $A$ in $\bb$ is a an object $M$ of~$\bb$ endowed with a left $A$-action $r \co A \otimes X \to M$ and a left $A$-coaction
$\delta \co A \otimes X \to M$, such that
$$
\psfrag{V}[Bc][Bc]{\scalebox{.8}{$M$}}
\psfrag{N}[Bc][Bc]{\scalebox{.8}{$N$}}
\psfrag{A}[Bc][Bc]{\scalebox{.8}{$A$}}
\psfrag{B}[Br][Br]{\scalebox{.8}{$\ldual{A}$}}
\psfrag{d}[Bc][Bc]{\scalebox{1}{$\delta$}}
\psfrag{r}[Bc][Bc]{\scalebox{1}{$r$}}
\rsdraw{.45}{.9}{YD-rel3} \, = \, \rsdraw{.45}{.9}{YD-rel4}\;.
$$
These Yetter-Drinfeld modules are the objects of a category $\YD{A}(\bb)$, whose morphisms are morphisms in $\bb$ which are $A$\ti linear and
$A$\ti colinear. This is a braided category. Its monoidal product is $(M,r,\delta) \otimes (M',r',\delta')=(M \otimes M', r'', \delta'')$ where
$$
\psfrag{A}[Bc][Bc]{\scalebox{.8}{$A$}}
\psfrag{U}[Bc][Bc]{\scalebox{.8}{$M$}}
\psfrag{r}[Bc][Bc]{\scalebox{1}{$r$}}
\psfrag{s}[Bc][Bc]{\scalebox{1}{$r'$}}
\psfrag{Y}[Bc][Bc]{\scalebox{.8}{$M'$}}
r''\, = \, \rsdraw{.45}{.9}{pm-YD-module}
 \quad \text{and} \quad \delta''\, = \,
\psfrag{r}[Bc][Bc]{\scalebox{1}{$\delta$}}
\psfrag{s}[Bc][Bc]{\scalebox{1}{$\delta'$}}
 \rsdraw{.45}{.9}{pm-YD-comodule} \;\,,
$$
its monoidal unit is $(\un,\varepsilon,u)$ where $u$ and $\varepsilon$ are the unit and counit of $A$, and its braiding is
$$
\psfrag{A}[Bc][Bc]{\scalebox{.8}{$A$}}
\psfrag{U}[Bc][Bc]{\scalebox{.8}{$M$}}
\psfrag{r}[Bc][Bc]{\scalebox{1}{$\delta$}}
\psfrag{s}[Bc][Bc]{\scalebox{1}{$r'$}}
\psfrag{Y}[Bc][Bc]{\scalebox{.8}{$M'$}}
c_{(M,r,\delta),(M',r',\delta')}\, = \, \rsdraw{.45}{.9}{braiding-YD} \,.
$$

Now let $\cc$ be a monoidal category, and let $\AT = (A,\sigma)$ be a bialgebra in the center $\zz(\cc)$ of $\cc$. The half braiding $\sigma$ defines on the category $\lmod{\cc}{A}$ of
left $A$-modules a monoidal structure, with monoidal product $(M,r) \otimes (N,s)=(M \otimes N, \omega)$ where
$$
\omega=\psfrag{A}[Bc][Bc]{\scalebox{.8}{$A$}} \psfrag{U}[Bc][Bc]{\scalebox{.8}{$M$}} \psfrag{r}[Bc][Bc]{\scalebox{1}{$r$}}  \psfrag{s}[Bc][Bc]{\scalebox{1}{$s$}} \psfrag{Y}[Bc][Bc]{\scalebox{.8}{$N$}}\psfrag{X}[Bc][Bc]{\scalebox{1}{$\sigma_M$}} \rsdraw{.45}{.9}{pmAsig} \, ,
$$
and monoidal unit $(\un,\varepsilon)$ where $\varepsilon$ is the counit of $\AT$. We denote this monoidal category by $\lmod{\cc}{\AT}$.  Note that if $A$ is a bialgebra in a braided category $\bb$ with braiding $\tau$, then $\AT=(A,\tau_{A,-})$ is a Hopf algebra in $\zz(\bb)$ and $\lmod{\bb}{A}=\lmod{\bb}{\AT}$ as monoidal categories.

\begin{prop}\label{thm-YD}
Let $\cc$ be a monoidal category and let $\AT$ be a Hopf algebra in the center $\zz(\cc)$ of $\cc$.  The assignment
\begin{equation*}
\left \{
\begin{array}{ccc}
\YD{\AT}(\zz(\cc)) & \to & \zz(\lmod{\cc}{\AT}) \\
\bigl((M,\sigma),r,\delta\bigr) & \mapsto & \bigl((M,r),\gamma\bigr)\\
f & \mapsto & f
\end{array}\right.
\end{equation*}
where
$$
\psfrag{A}[Bc][Bc]{\scalebox{.8}{$A$}} \psfrag{U}[Bc][Bc]{\scalebox{.8}{$M$}} \psfrag{r}[Bc][Bc]{\scalebox{1}{$\delta$}}  \psfrag{s}[Bc][Bc]{\scalebox{1}{$\sigma_N$}} \psfrag{t}[Bc][Bc]{\scalebox{1}{$t$}} \psfrag{Y}[Bc][Bc]{\scalebox{.8}{$N$}}
 \gamma_{(N,t)}
\, = \, \rsdraw{.45}{.9}{iso-YD1} \; \,,
$$
is a braided strict monoidal isomorphism.
\end{prop}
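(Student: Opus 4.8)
The plan is to check that the stated assignment is a well-defined functor, that it is strict monoidal and braided, and then to exhibit an explicit inverse. Throughout, read the graphical definition of $\gamma$ as
$$
\gamma_{(N,t)}=(t \otimes \id_M)(\id_A \otimes \sigma_N)(\delta \otimes \id_N)\co M \otimes N \to N \otimes M,
$$
where $\sigma$ is the half braiding of $M$ in $\zz(\cc)$, the map $t\co A \otimes N \to N$ is the $A$-action, and $\delta\co M \to A \otimes M$ is the coaction. I would first verify that $\gamma=\{\gamma_{(N,t)}\}$ is a half braiding on the object $(M,r)$ of $\lmod{\cc}{\AT}$. This splits into four points. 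Naturality in $(N,t)$ with respect to $\AT$-linear morphisms follows from naturality of $\sigma$ together with the defining relation $ft=t'(\id_A \otimes f)$ of a module morphism. That each $\gamma_{(N,t)}$ is itself a morphism of $\lmod{\cc}{\AT}$, i.e.\@ $\AT$-linear from $(M,r)\otimes(N,t)$ to $(N,t)\otimes(M,r)$, is precisely where the Yetter-Drinfeld compatibility of $(r,\delta)$ enters, combined with the multiplicativity of $\sigma$ and the fact that the module structure on a tensor product in $\lmod{\cc}{\AT}$ is built from the half braiding of $\AT$. The half braiding identity $\gamma_{(N,t)\otimes(N',t')}=(\id_N \otimes \gamma_{(N',t')})(\gamma_{(N,t)} \otimes \id_{N'})$ reduces, after expansion, to coassociativity of $\delta$ and to $\sigma_{N \otimes N'}=(\id_N \otimes \sigma_{N'})(\sigma_N \otimes \id_{N'})$. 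Invertibility of $\gamma_{(N,t)}$ is obtained by writing an explicit inverse from $\sigma^{-1}$ and the antipode $S$ of $\AT$.

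\textbf{Functoriality, monoidality, braiding.} A Yetter-Drinfeld morphism $f$ is by definition $A$-linear, hence a morphism $(M,r) \to (M',r')$ in $\lmod{\cc}{\AT}$; its $A$-colinearity and its compatibility with the half braidings in $\zz(\cc)$ together give $(\id_N \otimes f)\gamma_{(N,t)}=\gamma'_{(N,t)}(f \otimes \id_N)$, so $f$ is a morphism in $\zz(\lmod{\cc}{\AT})$. Strict monoidality is a direct comparison: the product action $r''$ on $M \otimes M'$ coincides with the module structure of $(M,r)\otimes(M',r')$ in $\lmod{\cc}{\AT}$, the product coaction $\delta''$ together with $\sigma,\sigma'$ yields exactly the half braiding of the tensor product in the center, and the unit Yetter-Drinfeld module $(\un,\varepsilon,u)$ maps to $\un_{\zz(\lmod{\cc}{\AT})}$. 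Finally, unwinding the definition of the braiding of a center, namely $\tau_{X,X'}$ is the component of the half braiding of $X$ at the underlying module of $X'$, one checks that the image of the Yetter-Drinfeld braiding $c$ equals $\gamma_{(M',r')}$, so the functor is braided.

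\textbf{The inverse.} Given $\bigl((M,r),\gamma\bigr) \in \zz(\lmod{\cc}{\AT})$, I would recover the half braiding of $M$ in $\zz(\cc)$ by restricting $\gamma$ to trivial modules, $\sigma_N:=\gamma_{(N,\varepsilon \otimes \id_N)}$; this is a genuine half braiding in $\cc$ because $N \mapsto (N,\varepsilon \otimes \id_N)$ is functorial for \emph{all} morphisms of $\cc$, so naturality of $\gamma$ upgrades to naturality of $\sigma$ in $\cc$. I would recover the coaction by $\delta:=\gamma_{(A,m)}(\id_M \otimes u)\co M \to A \otimes M$, using the regular module $(A,m)$. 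A short computation using $\sigma_\un=\id_M$ and the counit axiom shows that applying the direct construction to this $(\sigma,\delta)$ returns the original $\gamma$ on trivial and regular modules. The main obstacle, and the step I would spend the most care on, is to prove that the recovered data satisfy the Yetter-Drinfeld axiom and that the reconstructed half braiding agrees with $\gamma$ on \emph{all} modules. Here the key mechanism is monadicity of the forgetful functor: the action map $t\co (A \otimes N, m \otimes \id_N) \to (N,t)$ is $\AT$-linear from the free module, so naturality of $\gamma$ gives $\gamma_{(N,t)}=(t \otimes \id_M)\,\gamma_{(A \otimes N,\,\mathrm{free})}\,(\id_M \otimes u \otimes \id_N)$, reducing every component of $\gamma$ to its value on free modules; the half braiding identity then expresses $\gamma$ on the free module through its values on $(A,m)$ and on $(N,\varepsilon \otimes \id_N)$, that is, through $\delta$ and $\sigma$. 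Combined with the fact that the two passages are the identity on morphisms and mutually inverse on objects, this yields the desired strict monoidal braided isomorphism.
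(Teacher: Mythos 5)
Your proposal is correct and follows essentially the same route as the paper: the paper's proof is a one-line ``direct computation'' that specifies exactly the inverse you construct, namely $\sigma_X=\gamma_{(X,\varepsilon\otimes\id_X)}$ and $\delta=\gamma_{(A,m)}(\id_M\otimes u)$, with invertibility of $\gamma$ coming from the antipode of $\AT$. Your reduction to free modules via naturality is a reasonable way of filling in the details the paper leaves to the reader, but it is not a different method.
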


\begin{proof}
This is proved by direct computation, the inverse functor being given by $\sigma_X = \gamma_{(X, \varepsilon \otimes \id_X)}$ and $\delta = \gamma_{(A,m)}(\id_M \otimes u)$ where $m$, $u$, and $\varepsilon$ are the product, unit, and counit of  $\AT$. The invertibility of $\gamma$ results from the existence of the antipode of~$\AT$.
\end{proof}

\begin{rem}
The isomorphism of Proposition~\ref{thm-YD} still holds for bialgebras replacing centers with lax centers.
\end{rem}

%Then $\mathbb{A}=(A,\tau_{A, - })$ is a bialgebra in $\zz(\bb)$ with the same product, unit, coproduct, and counit of those of $A$.
% A \emph{Yetter-Drindeld module} of $\mathbb{A}$ is an object $(M,\sigma)$ of $\zz(\bb)$  endowed with a left $\mathbb{A}$-action $r\co \mathbb{A} \otimes (M,\sigma) \to (M,\sigma)$, a left $\mathbb{A}$-coaction $\delta\co  (M,\sigma) \to \mathbb{A} \otimes (M,\sigma)$ such that
%$$
%\psfrag{V}[Bc][Bc]{\scalebox{.8}{$M$}}
%\psfrag{N}[Bc][Bc]{\scalebox{.8}{$N$}}
%\psfrag{A}[Bc][Bc]{\scalebox{.8}{$A$}}
%\psfrag{B}[Br][Br]{\scalebox{.8}{$\ldual{A}$}}
%\psfrag{s}[Bc][Bc]{\scalebox{1}{$\sigma_A$}}
%\psfrag{d}[Bc][Bc]{\scalebox{1}{$\delta$}}
%\psfrag{r}[Bc][Bc]{\scalebox{1}{$r$}}
%\rsdraw{.45}{.9}{YD-rel} \, = \, \rsdraw{.45}{.9}{YD-rel2}\;.
%$$
%This gives a category $\mathcal{YDZ}_A(\bb)$, whose morphisms are morphisms in $\zz(\bb)$ which are $\mathbb{A}$-linear and $\mathbb{A}$-colinear.
%

\begin{rem}\label{rem-ydgtr}
Let $A$ be a Hopf algebra in a braided category $\bb$ with braiding $\tau$. As seen above, $\AT=(A,\tau_{A,-})$ is a Hopf algebra in $\zz(\bb)$. Applying Proposition~\ref{thm-YD}, we see that $\YD{\AT}(\zz(\bb))\simeq \zz(\lmod{\bb}{A})$ as braided categories.
\end{rem}

\subsection{A non-representability result}\label{sect-non-rep-res}
In Section~\ref{sect-DA}, we have seen that, given a Hopf algebra $A$ in a braided rigid category $\bb$ admitting a coend, the center of the category of $A$\ti modules
can be described as the category of modules of a (quasitriangular) Hopf algebra in $\bb$.

In this section, we give an example of a Hopf algebra $A$ in a braided category $\bb$ such that the center of the category of $A$\ti modules is not a category of modules of the form  $\leftidx{_\mathbb{B}}{\zz(\bb)}{}$ for any Hopf algebra $\mathbb{B}$ in $\zz(\bb)$ nor in the center of $\zz(\bb)$.

Let $n\geq 2$ be an integer and denote by $\bb_n$ the category of finite dimensional $(\mathbb{Z}/n\mathbb{Z})$-graded \kt vector spaces, where $\kk$ is a field containing a primitive $n^\text{th}$ root of unity $q$. Then $\bb_n$ is a braided rigid category with braiding $\tau$ defined as follows: for $U,V$ in $\bb_n$ and homogeneous $u \in U$, $v\in V$,
$$
\tau_{U,V}(u \otimes v)=q^{|u||v|}\, v \otimes u.
$$
Consider the algebra $A_n=\kk[X]/(X^n)$ as an object of $\bb_n$, with the graduation given by the polynomial degree.
By \cite[Example 3.1]{BKLT}, $A_n$ is an Hopf algebra in $\bb_n$. It is generated by the class $x$ of $X$. Its counit, coproduct, and antipode
are given by $\varepsilon(x^m)=\delta_{m,0}$,
$$
\Delta(x^m)=\sum_{k=0}^{m} \binom{m}{k}_{\!\!q} \,x^k \otimes x^{m-k}, \quad \text{and} \quad
  S(x^m)=(-1)^m q^{m(m-1)/2}\,x^m.
$$

Let
$
\uu \co\zz(\lmod{\bb_n}{A_n}) \to \zz(\bb_n)
$
be the functor defined on objects by  $$\uu((M,r),\gamma)=(M, \sigma=\{\sigma_U=\gamma_{(U, \id_U \otimes \varepsilon)}\}_{U \in \bb_n})$$ and on morphisms by $\uu(f)=f$.

\begin{prop}\label{prop-rere}
The functor $\uu \co\zz(\lmod{\bb_n}{A_n}) \to \zz(\bb_n)$ is not essentially surjective.
\end{prop}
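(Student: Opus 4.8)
The plan is to exploit that $\bb_n$ is pointed in order to classify $\zz(\bb_n)$ explicitly, and then to pin down a single simple object of $\zz(\bb_n)$ admitting no preimage under $\uu$. Every object of $\bb_n$ is a direct sum of the invertible lines $\kk_a$ sitting in degree $a\in\ZZ/n\ZZ$, so a half braiding on $\kk_a$ is the same as a character $\ZZ/n\ZZ\to\kk^\times$, necessarily $b\mapsto q^{sb}$ for a unique $s\in\ZZ/n\ZZ$. Hence the simple objects of $\zz(\bb_n)$ are the pairs $\kk_a^{(s)}=(\kk_a,\sigma^{(s)})$ with $\sigma^{(s)}_U(e\otimes u)=q^{s|u|}\,u\otimes e$ for homogeneous $u$, indexed by $(a,s)\in(\ZZ/n\ZZ)^2$. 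I will show that the object $\kk_0^{(1)}$ is not in the essential image of $\uu$; since $n\geq2$ this is a well-defined object of $\zz(\bb_n)$, and one object suffices.

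Suppose, for contradiction, that $((M,r),\gamma)\in\zz(\lmod{\bb_n}{A_n})$ satisfies $\uu((M,r),\gamma)\cong\kk_0^{(1)}$. Then $M$ is one-dimensional and concentrated in degree $0$; as $x$ has degree $1$ the action $r$ is forced to be trivial, so (up to the isomorphism) $(M,r)$ is the trivial module $\kk_0$ with generator $e$ and $\gamma_{(U,\mathrm{triv})}=\sigma^{(1)}_U$. Because $\kk_0$ is invertible I can cancel it and record $\gamma$ by the family $\tilde\gamma_{(N,t)}\colon N\to N$ defined by $\gamma_{(N,t)}(e\otimes n)=\tilde\gamma_{(N,t)}(n)\otimes e$. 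Naturality of $\gamma$ translates into naturality of $\tilde\gamma$ in $(N,t)$; and since $a=0$ the two $A_n$-module structures on $\kk_0\otimes N$ and $N\otimes\kk_0$ both reduce to the given one on $N$, so $A_n$-linearity of $\gamma_{(N,t)}$ says precisely that each $\tilde\gamma_{(N,t)}$ is $A_n$-linear. Finally $\tilde\gamma$ is degree preserving, and $\gamma_{(U,\mathrm{triv})}=\sigma^{(1)}_U$ reads $\tilde\gamma_{(U,\mathrm{triv})}(u)=q^{|u|}u$ on trivial modules; in particular $\tilde\gamma_{(\kk_0,\mathrm{triv})}=\id$ and $\tilde\gamma_{(\kk_{n-1},\mathrm{triv})}(1)=q^{n-1}\cdot 1$.

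The contradiction is obtained by evaluating $\tilde\gamma$ on the regular module $(A_n,m)$ in two ways. Applying naturality to the $A_n$-linear counit $\varepsilon\colon(A_n,m)\to(\kk_0,\mathrm{triv})$, together with $\tilde\gamma_{(\kk_0,\mathrm{triv})}=\id$ and degree preservation, forces $\tilde\gamma_{(A_n,m)}(1)=1$; $A_n$-linearity then propagates this along the action, giving $\tilde\gamma_{(A_n,m)}(x^{n-1})=x^{n-1}\cdot\tilde\gamma_{(A_n,m)}(1)=x^{n-1}$. On the other hand the socle inclusion $\iota\colon(\kk_{n-1},\mathrm{triv})\hookrightarrow(A_n,m)$, $1\mapsto x^{n-1}$, is $A_n$-linear, so naturality yields $\tilde\gamma_{(A_n,m)}(x^{n-1})=\iota\bigl(\tilde\gamma_{(\kk_{n-1},\mathrm{triv})}(1)\bigr)=q^{n-1}x^{n-1}$. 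Comparing the two values gives $q^{n-1}=1$, which is impossible for a primitive $n$-th root of unity with $n\geq2$. Thus $\kk_0^{(1)}$ has no preimage and $\uu$ is not essentially surjective.

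The point requiring the most care is that the half braiding $\gamma$ on \emph{non-trivial} $A_n$-modules is genuinely extra data, not determined by the recovered half braiding $\sigma=\sigma^{(1)}$ on $\kk_0$; under the identification $\zz(\lmod{\bb_n}{A_n})\simeq\YD{\AT}(\zz(\bb_n))$ of Remark~\ref{rem-ydgtr} it corresponds to a Yetter--Drinfeld coaction on $M$, which here must vanish because $M$ is one-dimensional. The key mechanism is that naturality rigidly connects the value of $\tilde\gamma$ on the socle $\kk_{n-1}$ to its value on the regular module, while $A_n$-linearity connects the latter to its value on the trivial quotient $\kk_0$; the degree shift carried by the $A_n$-action is exactly what makes these two determinations of $\tilde\gamma_{(A_n,m)}(x^{n-1})$ disagree by the nontrivial scalar $q^{n-1}$.
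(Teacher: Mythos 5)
Your proof is correct, and it reaches the contradiction by a more hands-on route than the paper. The paper first invokes the equivalence $\zz(\lmod{\bb_n}{A_n})\simeq\YD{\AT_n}(\zz(\bb_n))$ of Remark~\ref{rem-ydgtr}, takes the object $\kk_\chi$ of $\zz(\bb_n)$ attached to an arbitrary non-trivial character $\chi$, observes exactly as you do that a one-dimensional degree-zero object carries only the trivial $A_n$-action and coaction, and then lets the Yetter--Drinfeld compatibility axiom force $\sigma_{A_n}=\id_{A_n}$, whence $\sigma=\id$ since $A_n$ contains every simple of $\bb_n$ as a summand. You instead stay inside $\zz(\lmod{\bb_n}{A_n})$ and extract the same rigidity directly from the two defining properties of the half braiding $\gamma$ on the regular module $(A_n,m)$: $A_n$-linearity (which, as you correctly check, is what the module structure on $\kk_0\otimes N$ and $N\otimes\kk_0$ reduces to when the central object sits in degree $0$) pins $\tilde\gamma_{(A_n,m)}$ down from its value at $1$, while naturality with respect to the counit and the socle inclusion forces it to scale degrees $0$ and $n-1$ by $1$ and $q^{n-1}$ respectively. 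This is precisely the content of the Yetter--Drinfeld axiom in this example, unpacked by hand; what your version buys is independence from Proposition~\ref{thm-YD} and Remark~\ref{rem-ydgtr}, at the cost of a slightly longer computation and of fixing the particular character $b\mapsto q^{b}$ (harmless, since one missed object suffices). Two small remarks: the opening classification of all simple objects of $\zz(\bb_n)$ is not needed --- you only use that $\kk_0^{(1)}$ is a well-defined object of $\zz(\bb_n)$, which is immediate --- and the step $\tilde\gamma_{(A_n,m)}(1)=1$ could be obtained from degree preservation plus $\tilde\gamma_{(\kk_0,\mathrm{triv})}=\id$ and naturality of the counit exactly as you state, so no gap there.
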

\begin{proof}
Set $\AT_n=(A_n,\tau_{A_n,-})$. By Remark~\ref{rem-ydgtr}, it is enough to show that the forgetful functor $$\YD{\AT_n}(\zz(\bb_n)) \to \zz(\bb_n)$$ is not essentially surjective. Let $\chi$ be a non-trivial character of $\mathbb{Z}/n\mathbb{Z}$. For $U \in \bb_n$, define $\sigma_U \co  U \to U$ by $\sigma(u)=\chi(|u|)\, u$ for homogeneous $u \in U$. Then $\kk_\chi=(\kk,\sigma)$ is an object of $\zz(\bb_n)$. Assume there exists a Yetter-Drinfeld module $M=((M,\rho),r,\delta)$ over $\AT_n$  such that $F(M)$ is isomorphic to $\kk_\chi$. Without loss of generality, we assume that $(M,\rho)=\kk_\chi$. Now all $A_n$\ti actions or coactions on $\kk$ are trivial, that is, $r=\varepsilon \otimes \id_\kk$ and $\delta=u \otimes \id_\kk$ where $u$ and $\varepsilon$ are the unit and counit of $A_n$. Indeed, this follows the fact the degree 0 part of $A_n$ is $\kk$.  Now the Yetter-Drinfeld compatibility axiom implies that $\sigma_{A_n}=\id_{A_n}$. Hence $\sigma_U = \id_U$  for any $U \in \bb_n$ because $A_n$ generates $\bb_n$. This contradicts the fact that $\chi$ is non-trivial, and proves the proposition.
\end{proof}

\begin{cor}\label{prop-EX}
There exists no pair $(\mathbb{B},F)$, where $\mathbb{B}$  is a Hopf algebra in  $\zz(\bb_n)$ or its center $\zz(\zz(\bb_n))$, and $F \co \lmod{\zz(\bb_n)}{\mathbb{B}} \to \zz(\lmod{\bb_n}{A_n})$ is monoidal equivalence, such that
the diagram
$$\xymatrix@C=1pc{\lmod{\zz(\bb_n)}{\mathbb{B}} \ar[rr]^F\ar[rd]_{U} &\ar@{}[d]|(.4){\circlearrowright}& \zz(\lmod{\bb_n}{A_n}) \ar[ld]^{\uu}\\ & \zz(\bb_n)}$$
commutes up to monoidal natural isomorphism, where $U$ is the forgetful functor.
\end{cor}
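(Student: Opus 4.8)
The plan is to deduce this corollary directly from Proposition~\ref{prop-rere}, by showing that the existence of such a pair $(\mathbb{B},F)$ would force $\uu$ to be essentially surjective, in contradiction with that proposition. The one fact I need to establish is that the forgetful functor $U \co \lmod{\zz(\bb_n)}{\mathbb{B}} \to \zz(\bb_n)$ is essentially surjective; everything else is a formal composition argument.

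First I would verify this essential surjectivity — indeed $U$ is surjective on objects. For any object $Y$ of $\zz(\bb_n)$, equip it with the \emph{trivial} $\mathbb{B}$-action $r=\varepsilon \otimes \id_Y$, where $\varepsilon \co \mathbb{B} \to \un$ is the counit of $\mathbb{B}$. Since $\varepsilon$ is an algebra morphism, one has $\varepsilon m=\varepsilon \otimes \varepsilon$ and $\varepsilon u=\id_\un$, whence $r(m \otimes \id_Y)=\varepsilon \otimes \varepsilon \otimes \id_Y=r(\id_{\mathbb{B}} \otimes r)$ and $r(u \otimes \id_Y)=\id_Y$; so $(Y,\varepsilon \otimes \id_Y)$ is a $\mathbb{B}$-module with $U(Y,\varepsilon \otimes \id_Y)=Y$. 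This argument is insensitive to whether $\mathbb{B}$ is a Hopf algebra in $\zz(\bb_n)$ or in its center $\zz(\zz(\bb_n))$: in both cases $\varepsilon$ is a morphism to $\un$ and the underlying object of a $\mathbb{B}$-module lies in $\zz(\bb_n)$, so the trivial module is well defined in $\lmod{\zz(\bb_n)}{\mathbb{B}}$.

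Next I would suppose, for contradiction, that a pair $(\mathbb{B},F)$ as in the statement exists, so that there is a (monoidal) natural isomorphism $\uu F \cong U$. Forgetting monoidality, $\uu F$ and $U$ then have the same essential image. Since the essential image of $\uu F$ is contained in that of $\uu$, and $U$ is essentially surjective by the previous step, it follows that $\uu$ is essentially surjective. This contradicts Proposition~\ref{prop-rere}, and the corollary follows.

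I do not expect any real obstacle here: the content is entirely in Proposition~\ref{prop-rere}, and this corollary is a clean formal consequence. It is worth noting that the argument uses only that $F$ takes values in $\zz(\lmod{\bb_n}{A_n})$ and that the triangle commutes up to isomorphism — the full strength of $F$ being a monoidal \emph{equivalence} is not needed for the non-existence conclusion, only the essential surjectivity of the honest forgetful functor $U$.
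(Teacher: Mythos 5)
Your proof is correct and follows essentially the same route as the paper: both arguments rest on the observation that the forgetful functor $U \co \lmod{\zz(\bb_n)}{\mathbb{B}} \to \zz(\bb_n)$ is essentially surjective because the trivial action $X \mapsto (X,\varepsilon \otimes \id_X)$ provides a section, which combined with Proposition~\ref{prop-rere} yields the contradiction. The paper phrases this as the existence of a strict monoidal section of $U$, but the content is identical to your verification that the trivial module is well defined.
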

\begin{proof}
Observe that if $\mathbb{B}$ is a Hopf algebra in the center of a monoidal category~$\cc$, then the forgetful functor $\lmod{\cc}{\mathbb{B}} \to \cc$ admits a strict monoidal section given by $X \mapsto (X, \varepsilon \otimes \id_X)$, where $\varepsilon$ is the counit of $\mathbb{B}$, and it is therefore essentially surjective. This general fact together with Proposition~\ref{prop-rere} proves the corollary.
\end{proof}

\section{Hopf monads}\label{sect-Hopf-Monads-resume}

In this section we recall the definition of Hopf monads, and we list several results. See \cite{BV2,BLV} for a detailed treatment.

\subsection{Comonoidal functors}\label{sect-comonofunctor}
Let $(\cc,\otimes,\un)$ and $(\dd, \otimes, \un)$ be two monoidal categories.
A \emph{comonoidal functor} (also called \emph{opmonoidal functor}) from $\cc$ to
$\dd$ is a triple $(F,F_2,F_0)$, where $F\co \cc \to \dd$ is a functor, $F_2\co F \otimes \to F\otimes F$ is a natural
transformation, and $F_0\co F(\un) \to \un$ is a morphism in $\dd$, such that:
\begin{align*}
& \bigl(\id_{F(X)} \otimes F_2(Y,Z)\bigr) F_2(X,Y \otimes Z)= \bigl(F_2(X,Y) \otimes \id_{F(Z)}\bigr) F_2(X \otimes Y, Z) ;\\
& (\id_{F(X)} \otimes F_0) F_2(X,\un)=\id_{F(X)}=(F_0 \otimes \id_{F(X)}) F_2(\un,X) ;
\end{align*}
for all objects $X,Y,Z$ of $\cc$.

A comonoidal functor $(F,F_2,F_0)$ is said to be \emph{strong} (resp.\@ \emph{strict}) if $F_2$ and $F_0$ are
isomorphisms (resp.\@ identities). In that case, $(F,F^{-1}_2,F^{-1}_0)$ is a strong (resp. strict) monoidal functor.

A natural transformation $\varphi\co F \to G$ between comonoidal functors is \emph{comonoidal} if it satisfies:
\begin{equation*}
G_2(X,Y) \varphi_{X \otimes Y}= (\varphi_X \otimes \varphi_Y) F_2(X,Y)\quad \text{and} \quad G_0 \varphi_\un= F_0.
\end{equation*}

Note that the notions of comonoidal functor and comonoidal natural transformation are dual to the notions of monoidal functor and monoidal natural transformation (see Section~\ref{sect-monofunctor}).

\subsection{Hopf monads and their modules}\label{sect-Hopf-monoads}
Let $\cc$ be a category.
A \emph{monad} on $\cc$  is a monoid in the category of endofunctors of $\cc$, that
is, a triple $(T,\mu,\eta)$ consisting of a functor $T\co \cc \to
\cc$ and two natural transformations $$\mu=\{\mu_X\co T^2(X) \to
T(X)\}_{X \in \cc}\quad {\text {and}} \quad  \eta=\{\eta_X\co X \to
T(X)\}_{X \in \cc}$$  called the \emph{product} and the \emph{unit}
of $T$, such that for any object $X$ of $\cc$, $$\mu_X
T(\mu_X)=\mu_X\mu_{T(X)} \quad {\text {and}} \quad
\mu_X\eta_{T(X)}=\id_{T(X)}=\mu_X T(\eta_X).$$
Given a monad $T=(T, \mu, \eta)$ on $\cc$, a  $T$\ti module in
$\cc$ is a pair $(M,r)$ where $M$ is an object of $\cc$ and $r\co T(M) \to M$ is a
morphism in $\cc$ such that $r T(r)= r \mu_M$ and $r \eta_M= \id_M$.
A morphism from a $T$\ti module  $(M,r)$ to a $T$\ti module $(N,s)$ is a morphism $f \co M \to N$ in $\cc$ such that $f r=s T(f)$.
This defines the {\it category  $\cc^T$  of $T$-modules in $\cc$} with composition induced by that in $\cc$.
We
define a forgetful functor  $U_T\co\cc^T \to \cc$    by $U_T(M,r)=M$ and
$U_T(f)=f$.  %The forgetful functor $U_T$ has a left adjoint $F_T \co \cc \to \cc^T$, called the free module functor, defined by $F_T(X)=(T(X),\mu_X)$  and $F_T(f)=T(f)$.
%Note that if $\cc$ is $\kk$-additive and $T$ is $\kk$-linear (that is, $T$
%induces $\kk$-linear maps on Hom spaces), then the category $\cc^T$
%is $\kk$-additive and the functors $U_T$ and $F_T$ are  $\kk$-linear.

Let $\cc$ be a monoidal category. A \emph{bimonad} on    $\cc$ is a monoid in the
category of  comonoidal endofunctors of $\cc$. In other words, a bimonad on $\cc$ is a
monad $(T,\mu,\eta)$ on $\cc$ such that the  functor $T\co \cc \to
\cc$ and the natural transformations $\mu$ and $\eta$ are
comonoidal, that is, $T$ comes equipped with a natural transformation $ T_2=\{T_2(X,Y) \co  T(X \otimes Y)\to T(X)
\otimes T(Y)\}_{X,Y \in \cc} $ and a morphism $T_0\co T(\un) \to \un$ such that
\begin{align*}
& \bigl(\id_{T(X)} \otimes T_2(Y,Z)\bigr) T_2(X,Y \otimes Z)= \bigl(T_2(X,Y) \otimes \id_{T(Z)}\bigr) T_2(X \otimes Y, Z) ;\\
& (\id_{T(X)} \otimes T_0) T_2(X,\un)=\id_{T(X)}=(T_0 \otimes \id_{T(X)}) T_2(\un,X) ;\\
& T_2(X,Y)\mu_{X \otimes Y}=(\mu_X \otimes \mu_Y) T_2(T(X),T(Y))T(T_2(X,Y));\\
& T_2(X,Y)\eta_{X \otimes Y}=\eta_X \otimes \eta_Y.
\end{align*}
For any bimonad $T$ on $\cc$, the category  of $T$\ti modules
$\cc^T$  has a monoidal structure with unit object $(\un,T_0)$ and
with tensor product
$$(M,r) \otimes (N,s)=\bigl(M \otimes N, (r \otimes s) \,
T_2(M,N)\bigr). $$ Note that the forgetful functor $U_T\co \cc^T \to
\cc$ is strict monoidal.

A \emph{quasitriangular bimonad} on $\cc$ is a bimonad $T$ on $\cc$  equipped with an \Rt matrix, that is, a natural transformation $$R=\{R_{X,Y}\co
X \otimes Y \to T(Y) \otimes T(X)\}_{X,Y \in \cc}$$ satisfying appropriate axioms which ensure
that  the natural transformation $ \tau=\{\tau_{(M,r),(N,s)}\}_{(M,r), (N,s) \in \cc^T}$ defined by
$$
\tau_{(M,r),(N,s)}=(s \otimes r) R_{M,N}\co (M,r) \otimes (N,s) \to (N,s) \otimes (M,r)
$$
form a braiding in the category $\cc^T$ of $T$-modules, see~\cite{BV2}.

Given a  bimonad $(T,\mu,\eta)$ on $\cc$ and objects $X, Y\in \cc$,
one defines  the \emph{left fusion operator}
$$H^l_{X,Y} =(T(X) \otimes \mu_Y)T_2(X,T(Y)) \colon T(X\otimes T(Y)) \to T(X) \otimes
T(Y)$$
 and the \emph{right fusion operator}
$$
 H^r_{X,Y}=(\mu_X \otimes
T(Y))T_2(T(X),Y)\colon T(T(X)\otimes Y) \to T(X)\otimes T(Y).$$
A \emph{Hopf monad} on   $\cc$ is a bimonad on $\cc$
whose left and right fusion  operators  are isomorphisms for all objects $X, Y$ of $\cc$.
When $\cc$ is a rigid category,    a bimonad $T$ on
$\cc$ is a Hopf monad if and only if the category  $\cc^T$ is rigid.
The  structure of a rigid category in $\cc^T$ can then be
 encoded in terms of natural transformations
$$s^l=\{s^l_X\co T(\leftidx{^\vee}{T}{}(X)) \to \leftidx{^\vee}{X}{}\}_{X \in \cc}
\quad {\text {and}} \quad s^r=\{s^r_X\co T(T(X)^\vee) \to
X^\vee\}_{X \in \cc}$$ called the \emph{left and right antipodes}.
They are computed from the   fusion operators:
\begin{align*}
& s^l_X= \bigl(T_0T(\lev_{T(X)})(H^l_{\leftidx{^\vee}{T}{}(X),X})^{-1} \otimes \leftidx{^\vee}{\eta}{_X}\bigr)
\bigl(\id_{T(\leftidx{^\vee}{T}{}(X))} \otimes \lcoev_{T(X)}\bigr);\\
& s^r_X= \bigl(\eta_X^\vee \otimes T_0T(\rev_{T(X)})(H^r_{X,T(X)^\vee})^{-1}\bigr)
\bigl(\rcoev_{T(X)} \otimes \id_{T(T(X)^\vee)}\bigr).
\end{align*}
The  left and right duals of any $T$\ti module $(M,r)$ are then defined
  by
$$
\leftidx{^\vee}{(}{} M,r)=(\leftidx{^\vee}{M}{}, s^l_M T(\leftidx{^\vee}{r}{})  \quad \text{and} \quad(M,r)^\vee=(M^\vee, s^r_M T(r^\vee).
$$

\subsection{Centralizers}\label{sect-centralizers}
Let $T$ be a Hopf monad on a rigid category $\cc$. We say that $T$ is \emph{centralizable} if, for any object $X$ of $\cc$, the coend $$Z_T(X)=\int^{Y \in \cc} \ldual{T(Y)}\otimes X \otimes Y$$
exists (see \cite{BV3}).
In that case, the assignment  $X \mapsto Z_T(X)$ is a Hopf monad on~$\cc$, called the centralizer of $T$ and denoted by $Z_T$.
In particular, we say that $\cc$ is \emph{centralizable} if the identity functor $\id_\cc$ is centralizable. In that case, its centralizer $Z = Z_{1_\cc}$ is a quasitriangular Hopf monad on $\cc$, called the \emph{centralizer of $\cc$}. Moreover, there is a canonical isomorphism of braided categories
$\zz(\cc)\simeq \cc^Z$, see \cite{BV3}.

%The centralizer $Z_T$ of $T$ lifts canonically to a Hopf monad $\widetilde{Z_T}$ on $\mo{T}{\cc}$, which is the centralizer of $1_{\mo{T}{\cc}}$. Then, by \cite[Theorem 6.5]{BV3},  $D_T=\widetilde{Z_T} \cp T$ is a quasitriangular Hopf monad, called the \emph{double} of $T$, and satisfies $\zz(\mo{T}{\cc})\cong \mo{D_T}{\cc}$ as braided categories, where $\zz$ denotes the categorical center.

\subsection{Hopf algebras of the center define Hopf monads}\label{sect-rep-cent}

 Let $\cc$ be a monoidal category.
Any  bialgebra $\AT = (A,\sigma)$ of the center $\zz(\cc)$ of $\cc$  gives rise to a bimonad on $\cc$,
denoted by $A \otimes_\sigma ?$. It is defined by $A \otimes ?$ as a functor, with the monad structure defined by
\begin{center}
$\mu_X= m \otimes X =\psfrag{A}[Bc][Bc]{\scalebox{.8}{$A$}} \psfrag{X}[Bc][Bc]{\scalebox{.8}{$X$}}
  \rsdraw{.45}{.9}{mOTA2} \quad\text{and}\quad \eta_X=u\otimes X=\rsdraw{.45}{.9}{uOTA2},$
\end{center}
where
$m$ and $u$ are the product and unit of $A$, and endowed with the comonoidal structure:
\begin{center}
$(A \otimes_\sigma ?)_2(X,Y)=(A \otimes \sigma_X )(\Delta \otimes X)\otimes Y=\psfrag{A}[Bc][Bc]{\scalebox{.8}{$A$}} \psfrag{U}[Bc][Bc]{\scalebox{.8}{$X$}} \psfrag{Y}[Bc][Bc]{\scalebox{.8}{$Y$}}\psfrag{X}[Bc][Bc]{\scalebox{1}{$\sigma_X$}} \rsdraw{.45}{.9}{cpAsig}
\,, \quad
(A \otimes_\sigma ?)_0=\varepsilon=\psfrag{A}[Bc][Bc]{\scalebox{.8}{$A$}}\rsdraw{.45}{.9}{epsA},$
\end{center}
where $\Delta$ and $\varepsilon$ denote the coproduct and counit of $(A,\sigma)$.

The monoidal category  $\cc^{A \otimes_\sigma ?}$ is the monoidal category $_{\AT}\cc$ encountered in Section~\ref{sect-YDs}.

%The bimonads of the form $A \otimes_\sigma ?$  can be characterized as follows:
%\begin{lem}\label{lem-ATbimon}
%Let $A$ be an object of $\cc$ and consider the endofunctor $T=A \otimes ?$ of~$\cc$. Let $\Delta \co A \to A \otimes A$ and $\varepsilon \co A \to \un$ be morphisms in $\cc$ and  $\sigma \co A\otimes ? \to ? \otimes A$ be a natural transformation such that $\sigma_\un=\id_A$. Set
%$$ T_2(X,Y)=(A \otimes \sigma_X \otimes Y)(\Delta \otimes X \otimes Y) \quad \text{and} \quad T_0= \varepsilon.$$
%Then the following conditions are equivalent:
%\begin{enumerate}\labeli
%\item $(T,T_2,T_0)$ is a comonoidal endofunctor of $\cc$;
%\item $\sigma$ is a lax half braiding for $A$ and $(A,\sigma)$ is a coalgebra in $\zzl(\cc)$ with coproduct $\Delta$ and counit $\varepsilon$.
%\end{enumerate}
%Assume these equivalent conditions hold. Then $T=A\otimes_\sigma ?$ as comonoidal functors. Furthermore, let $m \co A \otimes A \to A$ and $u \co \un \to A$ be morphisms in $\cc$ and set:
%$$\mu = m \otimes ? \co T^2 \to T \quad \text{and} \quad \eta=u \otimes ? \co 1_\cc \to T.$$
%Then the following conditions are equivalent:
%\begin{enumerate}\labeli
%\setcounter{enumi}{2}
%\item $T$ is a bimonad with product $\mu$, unit $\eta$, and comonoidal structure $(T_2,T_0)$;
%\item $(A,\sigma)$ is a  lax central  bialgebra of $\cc$   with product $m$, unit $u$, coproduct $\Delta$, and counit~$\varepsilon$.
%\end{enumerate}
%If these equivalent conditions hold, $T=A\otimes_\sigma ?$ as bimonads.
%\end{lem}
%\begin{proof}
%The verification, lengthy but straightforward,  is left to the reader.
%\end{proof}

Let $(A,\sigma)$ be a bialgebra of $\zz(\cc)$.
The left and right fusion operators of the bimonad $A \otimes_\sigma ?$ are
\begin{align*}
&H^l_{X,Y}=(A \otimes X \otimes m)(A \otimes \sigma_X \otimes A)(\Delta \otimes X \otimes A) \otimes Y= \psfrag{A}[Bc][Bc]{\scalebox{.8}{$A$}} \psfrag{U}[Bc][Bc]{\scalebox{.8}{$X$}} \psfrag{Y}[Bc][Bc]{\scalebox{.8}{$Y$}}\psfrag{X}[Bc][Bc]{\scalebox{1}{$\sigma_X$}} \rsdraw{.45}{.9}{HlAsig}\,,\\
&H^r_{X,Y}=(m \otimes X  \otimes A)(A \otimes \sigma_{A \otimes X})(\Delta \otimes A \otimes X) \otimes Y= \psfrag{A}[Bc][Bc]{\scalebox{.8}{$A$}} \psfrag{U}[Bc][Bc]{\scalebox{.8}{$X$}} \psfrag{Y}[Bc][Bc]{\scalebox{.8}{$Y$}}\psfrag{X}[Bc][Bc]{\scalebox{1}{$\sigma_{A\otimes X}$}} \rsdraw{.45}{.9}{HrAsig}\,.
\end{align*}
So, by Remark~\ref{rem-HA-fusion}, the bimonad $A \otimes_\sigma ?$ is a Hopf monad if and only if $(A,\sigma)$ is
a Hopf algebra.

\subsection{Characterization of Hopf monads representable by Hopf algebras}\label{sect-char-rep}
Let $\cc$ be a monoidal category. A \emph{Hopf monad morphism} $f\co T \to T'$ between two Hopf monads $(T,\mu,\eta)$ and $(T',\mu',\eta')$ on $\cc$ is a comonoidal natural transformation $f=\{f_X \co T(X) \to T'(X)\}_{X \in \cc}$ such that, for any $X \in \Ob(\cc)$,
$$
f_X \mu_X= \mu'_X f_{T'(X)}T(f_X) \quad \text{and} \quad f_X \eta_X=\eta'_X.
$$

Let $\cc$ be a monoidal category. A Hopf monad $T$ on $\cc$ is \emph{augmented} if it is endowed with an \emph{augmentation}, that is, a Hopf monad morphism $e\co T \to 1_\cc$.

Augmented Hopf monads on $\cc$ form a category
$\HopfMon(\cc)/1_\cc$, whose objects are augmented Hopf monads on $\cc$, and morphisms between two augmented Hopf monads $(T,e)$ and $(T',e')$ are morphisms of Hopf monads $f\co T \to T'$ such that $e'f=e$.

If $(A,\sigma)$ is a Hopf algebra of the center $\zz(\cc)$ of $\cc$,   the Hopf monad $A\otimes_\sigma ?$  is augmented, with augmentation $e= \varepsilon \otimes ? \co A \otimes_\sigma ? \to 1_\cc$, where $\varepsilon$ is the counit of $(A,\sigma)$.  Hence a functor
$$\mathfrak{R} \co \left\{ \begin{array}{ccc} \HopfAlg(\zz(\cc)) & \to & \HopfMon(\cc)/1_\cc \\ (A,\sigma) & \mapsto &(A\otimes_\sigma ?,\varepsilon \otimes ?)\end{array} \right.$$

\begin{thm}[{\cite[Theorem~5.7]{BLV}}]\label{thm-rep-HopfMon}
The functor $\mathfrak{R}$ is an equivalence of categories.
\end{thm}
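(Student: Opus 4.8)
The plan is to produce an explicit quasi-inverse functor $\mathfrak{L}\co \HopfMon(\cc)/1_\cc \to \HopfAlg(\zz(\cc))$ and to exhibit natural isomorphisms $\mathfrak{R}\mathfrak{L} \cong \id$ and $\mathfrak{L}\mathfrak{R} \cong \id$. Given an augmented Hopf monad $(T,e)$, the underlying object of the reconstructed Hopf algebra will be $A=T(\un)$. The central technical device is the natural transformation $\theta_X=(\id_A \otimes e_X)T_2(\un,X)\co T(X) \to A \otimes X$ (using $\un \otimes X = X$). The first and most important step is to prove that $\theta$ is a natural isomorphism; this is where the Hopf (as opposed to merely bimonad) hypothesis is essential, since invertibility of $\theta$ is deduced from invertibility of the fusion operators $H^l_{\un,-}$ and $H^r_{-,\un}$ of $T$, together with the monad-morphism identity $e_X\mu_X=e_X\,T(e_X)$ satisfied by the augmentation. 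I expect this invertibility to be the main obstacle: for a general augmented bimonad $\theta$ need not be invertible, so the argument must genuinely exploit the fusion operators.

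Once $\theta$ is known to be invertible, I would transport the comonoidal monad structure of $T$ along $\theta$ and read off the Hopf algebra structure on $A$. Concretely: the product is $m=\mu_\un \theta_A^{-1}\co A \otimes A \to A$, the unit is $u=\eta_\un\co \un \to A$, the coproduct is $\Delta=T_2(\un,\un)\co A \to A \otimes A$, the counit is $\varepsilon=T_0\co A \to \un$, and the half-braiding is $\sigma_X=(e_X \otimes \id_A)T_2(X,\un)\theta_X^{-1}\co A \otimes X \to X \otimes A$. I would first check that $\sigma$ is a half-braiding, i.e.\@ a natural isomorphism satisfying $\sigma_{X \otimes Y}=(\id_X \otimes \sigma_Y)(\sigma_X \otimes \id_Y)$; this follows from the comonoidality axioms for $T_2$ and the naturality of $e$. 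The antipode of $(A,\sigma)$ is obtained from the left (or right) antipode of the Hopf monad $T$ at $\un$, namely from $s^l_\un$ transported by $\theta$; checking the antipode axiom reduces to the defining relation of $s^l$ in terms of the inverse fusion operator. Verifying the remaining bialgebra axioms (associativity, coassociativity, and compatibility of $\Delta,\varepsilon$ with $m,u$ in the braided category $\zz(\cc)$) is a direct diagrammatic computation from the bimonad axioms, which I would not write out in full.

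With these structures in place, I would show that $\theta\co T \to A \otimes_\sigma ?$ is an isomorphism of augmented Hopf monads: that it intertwines $\mu,\eta,T_2,T_0$ with the corresponding data of $A \otimes_\sigma ?$ computed in Section~\ref{sect-rep-cent}, and that it is compatible with the augmentations, i.e.\@ $(\varepsilon \otimes \id_X)\theta_X=e_X$. This yields the isomorphism $\mathfrak{R}\mathfrak{L}(T,e)\cong(T,e)$, naturally in $(T,e)$. The opposite composite is easier: starting from a Hopf algebra $(A,\sigma)$ in $\zz(\cc)$ and applying $\mathfrak{R}$, one computes directly that $\theta=\id$ and that the reconstruction recovers $m,u,\Delta,\varepsilon,\sigma$ and the antipode of $(A,\sigma)$ unchanged, giving $\mathfrak{L}\mathfrak{R}\cong\id$.

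Finally, functoriality of $\mathfrak{L}$ is handled by sending a morphism $f\co(T,e)\to(T',e')$ of augmented Hopf monads to $f_\un\co A=T(\un)\to A'=T'(\un)$; the conditions that $f$ be a comonoidal monad morphism with $e'f=e$ translate precisely into the statement that $f_\un$ is a morphism of Hopf algebras in $\zz(\cc)$ (in particular compatible with the half-braidings), using the naturality of $\theta$ and $\theta'$. Together with the two natural isomorphisms above, this establishes that $\mathfrak{R}$ is an equivalence of categories.
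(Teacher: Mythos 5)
The paper does not prove this statement: it is quoted from \cite[Theorem~5.7]{BLV}, so there is no internal proof to compare against. Your sketch is a correct reconstruction of the argument given there: the quasi-inverse sends $(T,e)$ to $A=T(\un)$, and the crux is exactly the invertibility of $\theta_X=(\id_{T(\un)}\otimes e_X)T_2(\un,X)$, which is the left Hopf operator $\FO^l_{\un,(X,e_X)}$ of the adjunction $(F_T,U_T)$ at the $T$\ti module $(X,e_X)$ determined by the augmentation --- so you can invoke the fact (quoted in Section~\ref{sect-cp-cq}) that a Hopf monad's comonoidal adjunction is a Hopf adjunction, rather than re-deriving invertibility from the fusion operators by hand. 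Two points deserve explicit care in a full write-up: checking that $m$, $\Delta$, etc.\@ are morphisms in $\zz(\cc)$ (i.e.\@ compatible with the half-braiding $\sigma$), and the antipode, for which it is cleaner to note that once $\theta$ is an isomorphism of bimonads onto $A\otimes_\sigma ?$, the latter is a Hopf monad, and the fusion-operator criterion of Section~\ref{sect-rep-cent} then forces $(A,\sigma)$ to be a Hopf algebra, avoiding an explicit transport of $s^l_\un$.
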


In other words, Hopf monads representable by Hopf algebras of the center are nothing but augmented Hopf monads. Not all Hopf monads are of this kind:

\begin{rem}
Let $\cc$ be a centralizable rigid category, and let $Z$ be its centralizer, which is a quasitriangular Hopf monad on $\cc$, see Section~\ref{sect-centralizers}. Then augmentations of $Z$ are in one-to-one correspondence with braidings on $\cc$. In particular if $\cc$ is not braided, then $Z$ is not representable by a Hopf algebra of the center of $\cc$. For example,
let $\cc=G\ti \vect$ be the category of finite-dimensional $G$\ti graded vector spaces over a field $\kk$ for some finite group $G$. It is centralizable, and its centralizer is representable by a Hopf algebra of the center of $\cc$ if and only if $G$ is abelian (see~\cite[Remark 9.2]{BV3}).
\end{rem}

%\begin{rem}\label{rem-notrep}
%Hopf monads are not representable in general. A counterexample is given in \cite{BV3} in terms of centralizers. Let $T$ be a centralizable Hopf monad on a rgid category $\cc$ (see Example~\ref{exa-double}). In general the centralizer $Z_T$ of $T$ is not representable by a Hopf algebra.
%\end{rem}

Hopf monads on a braided category $\bb$ which are representable by Hopf algebras in $\bb$ can also be characterized as follows:

\begin{cor}[{\cite[Corollary5.9]{BLV}}]\label{cor-rep-braided}
Let $T$ be a Hopf monad on a braided category $\bb$. Then $T$ is isomorphic to the Hopf monad $A \otimes ?$ for some Hopf algebra $A$ in $\bb$
if and only if it is endowed with an augmentation $e \co T \to 1_\cc$ compatible with the braiding $\tau$ of $\bb$ in the following sense: for any object $X$ of $\bb$,
$$(e_X \otimes T\un)T_2(X,\un)=(e_X \otimes T\un)\tau_{T\un,TX}T_2(\un,X).$$
\end{cor}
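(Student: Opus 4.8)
The plan is to deduce this corollary from Theorem~\ref{thm-rep-HopfMon} by singling out, among all augmented Hopf monads on $\bb$, those whose associated Hopf algebra in the center $\zz(\bb)$ actually lies in the image of the canonical embedding $\bb \to \zz(\bb)$, $X \mapsto (X,\tau_{X,-})$. Recall (Section~\ref{sect-centerusual}) that this embedding is a fully faithful braided monoidal section of the forgetful functor $\zz(\bb)\to\bb$; consequently a Hopf algebra in $\bb$ is the same thing as a Hopf algebra $(A,\sigma)$ in $\zz(\bb)$ whose half braiding is the canonical one, namely $\sigma=\tau_{A,-}$. Moreover, for such a Hopf algebra the comonoidal structure of $A\otimes_\sigma ?$ is built from $\tau_{A,-}$, so this bimonad coincides, as a Hopf monad, with $A\otimes ?$. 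Hence the assertion \emph{``$T\cong A\otimes ?$ for some Hopf algebra $A$ in $\bb$''} is equivalent to \emph{``$T\cong A\otimes_\sigma ?$ for some Hopf algebra $(A,\sigma)$ of $\zz(\bb)$ with $\sigma=\tau_{A,-}$''}.

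Next I would invoke the equivalence $\mathfrak{R}$ of Theorem~\ref{thm-rep-HopfMon}. By that theorem, $T$ admits an augmentation $e\co T \to 1_\bb$ if and only if $T\cong A\otimes_\sigma ?$ for some Hopf algebra $(A,\sigma)$ of $\zz(\bb)$, with $(A,\sigma)=\mathfrak{R}^{-1}(T,e)$; in that case $A=T\un$ as an object, and the Hopf algebra structure of $(A,\sigma)$ is recovered from $(T,e)$ (for instance the product is $\mu_\un$, the coproduct is $T_2(\un,\un)$, and the counit is $T_0$). Combined with the previous paragraph, the whole statement reduces to the following claim: \emph{given an augmentation $e$, the associated half braiding $\sigma$ satisfies $\sigma=\tau_{A,-}$ if and only if the displayed compatibility condition holds}.

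To prove the claim I would evaluate both sides of the displayed identity in the model $T=A\otimes_\sigma ?$ with $e=\varepsilon\otimes ?$; since both sides are expressed purely through $e$, $T_2$, the braiding $\tau$, and $T\un=A$, they are invariant under isomorphisms of augmented Hopf monads, so computing in this model is legitimate. On the left, $T_2(X,\un)=(\id_A\otimes\sigma_X)(\Delta\otimes\id_X)$, and postcomposing with $e_X\otimes\id_A=\varepsilon\otimes\id_X\otimes\id_A$ collapses the first tensorand via the counit axiom $(\varepsilon\otimes\id_A)\Delta=\id_A$, leaving exactly $\sigma_X$. On the right, $T_2(\un,X)=\Delta\otimes\id_X$; applying the hexagon identity $\tau_{A,A\otimes X}=(\id_A\otimes\tau_{A,X})(\tau_{A,A}\otimes\id_X)$ and then collapsing the remaining copy of $A$ via $(\id_A\otimes\varepsilon)\Delta=\id_A$ leaves exactly $\tau_{A,X}$. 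Thus the left-hand side computes $\sigma_X$ and the right-hand side computes $\tau_{A,X}$, so the displayed condition is equivalent to $\sigma_X=\tau_{A,X}$ for all $X$, which is the claim.

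Finally I would assemble the two implications. If $T\cong A\otimes ?$ for a Hopf algebra $A$ in $\bb$, I transport the canonical augmentation $\varepsilon\otimes ?$ of $A\otimes_{\tau_{A,-}} ?$ along the isomorphism; then $\sigma=\tau_{A,-}$, so by the claim the compatibility condition holds. Conversely, an augmentation $e$ satisfying the condition produces, via $\mathfrak{R}^{-1}$, a Hopf algebra $(A,\sigma)$ of $\zz(\bb)$ with $\sigma=\tau_{A,-}$ by the claim; hence $A=T\un$ underlies a Hopf algebra in $\bb$ and $T\cong A\otimes_\sigma ?=A\otimes ?$. The only genuinely computational step, and the main obstacle, is verifying that the two sides of the displayed identity reduce to $\sigma_X$ and $\tau_{A,X}$ respectively; everything else is formal once Theorem~\ref{thm-rep-HopfMon} and the properties of the embedding $\bb\to\zz(\bb)$ are in hand.
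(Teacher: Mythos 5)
Your proposal is correct and follows exactly the intended route: the paper states this as a citation of \cite[Corollary 5.9]{BLV}, whose proof is precisely this deduction from Theorem~\ref{thm-rep-HopfMon}, identifying Hopf algebras in $\bb$ with Hopf algebras of $\zz(\bb)$ whose half braiding is $\tau_{A,-}$ and checking that the displayed identity computes $\sigma_X$ on the left and $\tau_{A,X}$ on the right. The key computation (counit axiom on the left; $\sigma_\un=\id_A$, the hexagon, and naturality of $\tau$ on the right) is carried out correctly.
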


\section{The central double of a Hopf algebra}\label{sect-central-double}
Let $A$ be a Hopf algebra in a  braided rigid  category $\bb$. Remark that any object $X$ of $\bb$ has a trivial right $A$-action given by $\id_X \otimes \varepsilon \co X \otimes A \to X$, where $\varepsilon$ is the counit of $A$. This defines a functor
$$
\uu \co \zz(\bb_A) \to \zz(\bb),
$$
by setting $\uu((M,r),\gamma)=(M, \sigma=\{\sigma_X=\gamma_{(X, \id_X \otimes \varepsilon)}\}_{X \in \bb})$ on objects and $\uu(f)=f$ on morphisms. Then $\uu$ is a strict monoidal functor. In this section, we prove that $\uu$ is monadic and we explicit its associated quasitriangular Hopf monad.

For any object $(M,\sigma)$ of $\zz(\bb)$, set $$d_A(M,\sigma)=\bigl(M \otimes A \otimes \ldual{A},\varsigma=\{\varsigma_X\}_{X \in \bb}\bigr)
\quad \text{with} \quad
\psfrag{A}[Bc][Bc]{\scalebox{.8}{$A$}}
\psfrag{B}[Br][Br]{\scalebox{.8}{$\ldual{A}$}}
\psfrag{C}[Bc][Bc]{\scalebox{.8}{$C$}}
\psfrag{V}[Bc][Bc]{\scalebox{.8}{$M$}}
\psfrag{X}[Bc][Bc]{\scalebox{.8}{$X$}}
\psfrag{s}[Bc][Bc]{\scalebox{.8}{$\sigma_X$}}
\varsigma_X \, = \, \rsdraw{.45}{.9}{varsigma} \,.
$$
For any morphism $f$ in $\zz(\bb)$, set  $d_A(f)=f \otimes \id_{A \otimes \ldual{A}}$. Then $d_A$ is clearly an endofunctor of $\zz(\bb)$.

\begin{figure}[t]
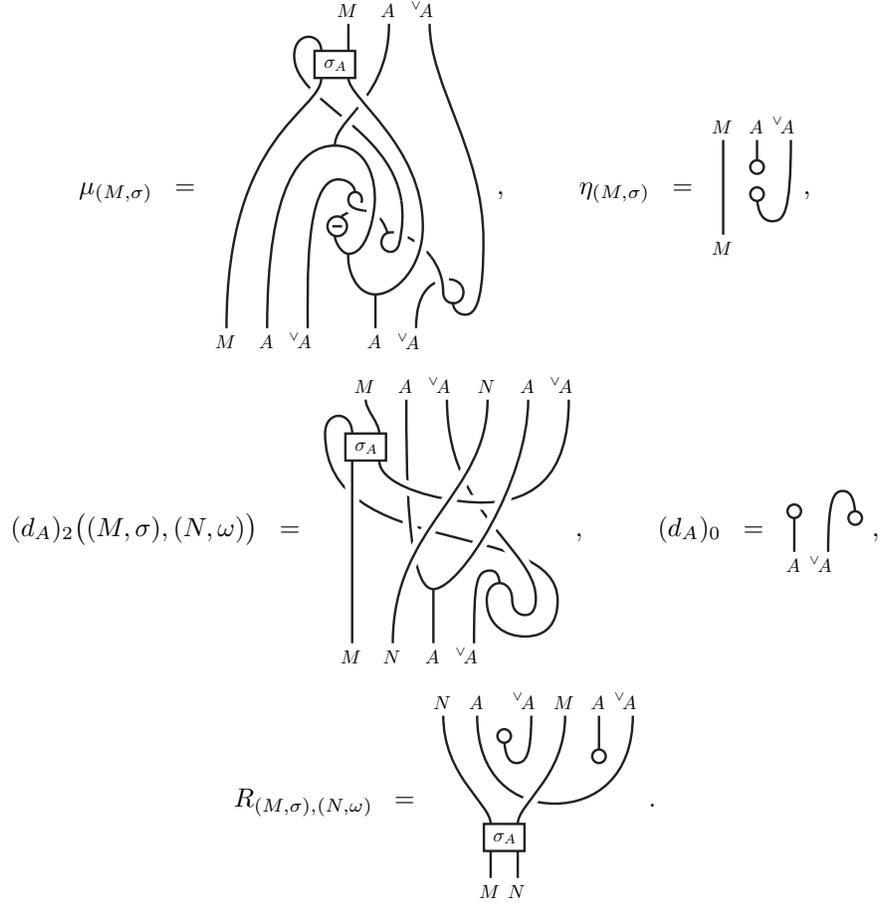

\begin{center}
\psfrag{V}[Bc][Bc]{\scalebox{.8}{$M$}}
\psfrag{A}[Bc][Bc]{\scalebox{.8}{$A$}}
\psfrag{B}[Br][Br]{\scalebox{.8}{$\ldual{A}$}}
\psfrag{C}[Bc][Bc]{\scalebox{.8}{$C$}}
\psfrag{s}[Bc][Bc]{\scalebox{.8}{$\sigma_A$}}
$\mu_{(M,\sigma)}$ \, = \, \rsdraw{.45}{.9}{dA-prod}\;\,,
\qquad \,  $\eta_{(M,\sigma)}$ \, = \, \rsdraw{.45}{.9}{dA-eta}\;, \\[1em]
\psfrag{V}[Bc][Bc]{\scalebox{.8}{$M$}}
\psfrag{N}[Bc][Bc]{\scalebox{.8}{$N$}}
\psfrag{A}[Bc][Bc]{\scalebox{.8}{$A$}}
\psfrag{B}[Br][Br]{\scalebox{.8}{$\ldual{A}$}}
\psfrag{C}[Bc][Bc]{\scalebox{.8}{$C$}}
\psfrag{s}[Bc][Bc]{\scalebox{.8}{$\sigma_A$}}
$(d_A)_2\bigl((M,\sigma),(N,\omega)\bigr)$ \, = \, \rsdraw{.45}{.9}{dA-2},
\qquad \,  $(d_A)_0$ \, = \, \rsdraw{.45}{.9}{dA-0}\;, \\[1em]
\psfrag{V}[Bc][Bc]{\scalebox{.8}{$M$}}
\psfrag{N}[Bc][Bc]{\scalebox{.8}{$N$}}
\psfrag{A}[Bc][Bc]{\scalebox{.8}{$A$}}
\psfrag{B}[Br][Br]{\scalebox{.8}{$\ldual{A}$}}
\psfrag{C}[Bc][Bc]{\scalebox{.8}{$C$}}
\psfrag{s}[Bc][Bc]{\scalebox{.8}{$\sigma_A$}}
$R_{(M,\sigma),(N,\omega)}$ \, = \, \rsdraw{.45}{.9}{dA-Rmat} .
\end{center}
\caption{Structural morphisms of $d_A$}
\label{fig-mondA}
\end{figure}

\begin{thm}\label{thm-centraldouble}
The endofunctor $d_A$ is a quasitriangular Hopf monad on $\zz(\bb)$, with product $\mu$, unit $\eta$, comonoidal structure, and \Rt matrix $R$ given in Figure~\ref{fig-mondA}. Furthermore the functor
\begin{equation*}
\Psi\co\left \{
\begin{array}{ccc}
\zz(\bb)^{d_A} & \to & \zz(\bb_A) \\
\bigl((M,\sigma),\rho\bigr) & \mapsto & \bigl((M,r),\gamma\bigr)\\
f & \mapsto & f
\end{array}\right.
\end{equation*}
where
$$
\psfrag{V}[Bc][Bc]{\scalebox{.8}{$M$}}
\psfrag{N}[Bc][Bc]{\scalebox{.8}{$N$}}
\psfrag{A}[Bc][Bc]{\scalebox{.8}{$A$}}
\psfrag{B}[Br][Br]{\scalebox{.8}{$\ldual{A}$}}
\psfrag{s}[Bc][Bc]{\scalebox{.8}{$\sigma_A$}}
\psfrag{r}[Bc][Bc]{\scalebox{.9}{$\rho$}}
\psfrag{a}[Bc][Bc]{\scalebox{.9}{$s$}}
r \, = \, \rsdraw{.45}{.9}{dA-r} \qquad \text{and} \qquad \gamma_{(N,s)}\, = \, \rsdraw{.45}{.9}{dA-gamma} \;,
$$
is a braided strict monoidal isomorphism, with inverse given by
$$
\psfrag{V}[Bc][Bc]{\scalebox{.8}{$M$}}
\psfrag{N}[Bc][Bc]{\scalebox{.8}{$N$}}
\psfrag{A}[Bc][Bc]{\scalebox{.8}{$A$}}
\psfrag{B}[Br][Br]{\scalebox{.8}{$\ldual{A}$}}
\psfrag{s}[Bc][Bc]{\scalebox{1}{$\gamma_{(X, \id_X \otimes \varepsilon)}$}}
\psfrag{r}[Bc][Bc]{\scalebox{.9}{$\rho$}}
\psfrag{a}[Bc][Bc]{\scalebox{.9}{$s$}}
\sigma_X \, = \, \rsdraw{.45}{.9}{dA-sigma} \quad \text{and} \quad
\psfrag{s}[Bc][Bc]{\scalebox{1}{$\gamma_{(A \otimes \ldual{A}, \alpha)}$}}
\psfrag{r}[Bc][Bc]{\scalebox{.9}{$r$}}
 \rho \, = \,  \rsdraw{.45}{.9}{dA-rho}   \quad \text{where} \quad  \alpha \, = \,\rsdraw{.45}{.9}{dA-alpha}\,.
$$
Moreover, the
following triangle of monoidal functors
commutes:
$$\xymatrix@C=1pc{\zz(\bb)^{d_A}  \ar[rr]^\Psi\ar[rd]_{U_{d_A}} &\ar@{}[d]|(.4){\circlearrowright} & \zz(\bb_A) \ar[ld]^{\uu}\\ & \zz(\bb)}$$
\end{thm}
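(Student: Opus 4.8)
The plan is to verify the three assertions in turn---that $d_A$ is a quasitriangular Hopf monad, that $\Psi$ is a braided strict monoidal isomorphism with the stated inverse, and that the triangle commutes---organizing the logical dependencies so that the rigid and braided structures on $\zz(\bb)^{d_A}$ are obtained by transport from $\zz(\bb_A)$ rather than checked from scratch.

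First I would establish that $d_A=(d_A,\mu,\eta)$ is a \emph{bimonad} on $\zz(\bb)$. Concretely this means verifying, by graphical calculus, the monad axioms for the product $\mu$ and unit $\eta$ of Figure~\ref{fig-mondA}, the comonoidal-functor axioms for $\bigl((d_A)_2,(d_A)_0\bigr)$, and the two relations expressing compatibility of $\mu,\eta$ with the comonoidal structure (Section~\ref{sect-Hopf-monoads}). The product $\mu$ is essentially the product of $A$ on the $A$\ti leg together with the evaluation $\lev_A$ contracting the inner $\ldual{A}$ against $A$, all conjugated by the half-braiding $\sigma_A$; the comonoidal structure uses the coproduct $\Delta$ and the coevaluation $\lcoev_A$. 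The verifications reduce to the Hopf algebra axioms of $A$, the duality identities for $\ldual{A}$, and the naturality and hexagon identity of the half-braiding $\sigma$. Once this is done, $\zz(\bb)^{d_A}$ is a monoidal category and $U_{d_A}$ is strict monoidal.

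Next I would construct $\Psi$ and its inverse and prove they are mutually inverse functors. Given a $d_A$\ti module $\bigl((M,\sigma),\rho\bigr)$, the morphism $\rho\co M\otimes A\otimes\ldual{A}\to M$ encodes, via the rigidity of $A$, both a right $A$\ti action $r$ on $M$ and an $A$\ti coaction; the formulas of the statement repackage these as the right $A$\ti module $(M,r)$ and the half-braiding $\gamma$ on it. The conceptual skeleton is that $d_A$\ti modules parallel Yetter--Drinfeld modules over the Hopf algebra $\AT=(A,\tau_{A,-})$ of $\zz(\bb)$, so that the content of the isomorphism is the analogue, adapted to right modules, of Proposition~\ref{thm-YD} (and Remark~\ref{rem-ydgtr}). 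I would check that $(M,r)$ is a right $A$\ti module, that $\gamma$ satisfies the half-braiding axioms in $\bb_A$ (naturality with respect to $A$\ti linear maps and the tensor compatibility), and that $\Psi$ preserves morphisms; then that the inverse, built from $\gamma_{(X,\id_X\otimes\varepsilon)}$ and $\gamma_{(A\otimes\ldual{A},\alpha)}$, lands in $\zz(\bb)^{d_A}$ and is two-sided inverse to $\Psi$. As in the proof of Proposition~\ref{thm-YD}, the invertibility step is precisely where the invertible antipode of $A$ is used to disentangle the coaction from the action.

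Finally, strict monoidality of $\Psi$ follows by comparing the two monoidal structures through the strict monoidal functors $\uu$ and $U_{d_A}$, and the commutativity of the triangle $\uu\Psi=U_{d_A}$ is immediate from the inverse formula $\sigma_X=\gamma_{(X,\id_X\otimes\varepsilon)}$, which is exactly the definition of $\uu$. Since $\bb$ is rigid and $A$ is a Hopf algebra, $\bb_A$ is rigid, hence so is its center $\zz(\bb_A)$; transporting this along the monoidal isomorphism $\Psi$ shows $\zz(\bb)^{d_A}$ is rigid, so by the criterion of Section~\ref{sect-Hopf-monoads} the bimonad $d_A$ is a \emph{Hopf monad} (equivalently, one checks directly, using Remark~\ref{rem-HA-fusion}, that its left and right fusion operators are invertible). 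Likewise, the canonical braiding of $\zz(\bb_A)$ transports to a braiding on $\zz(\bb)^{d_A}$; verifying that this braiding is the one induced by the natural transformation $R$ of Figure~\ref{fig-mondA} shows simultaneously that $R$ is an \Rt matrix for $d_A$ and that $\Psi$ is braided. The main obstacle throughout is the graphical bookkeeping in the two genuinely nontrivial verifications: the bimonad compatibility axioms for $d_A$, and the half-braiding axioms for $\gamma$ together with the mutual-inverse computation, where the antipode of $A$ and the duality morphisms must be woven through the half-braiding $\sigma$.
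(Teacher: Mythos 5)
Your proposal is correct and matches the paper's approach: the paper's own proof consists entirely of the sentence ``One verifies the theorem by direct computation. This is left to the reader,'' and your outline is a sound organization of exactly that computation. The one organizational choice you make explicit---deducing the Hopf and quasitriangular properties of $d_A$ by transporting the rigidity and braiding of $\zz(\bb_A)$ through $\Psi$ rather than inverting the fusion operators directly---is legitimate given the criteria recalled in Section~\ref{sect-Hopf-monoads}, and is consistent with everything the paper asserts.
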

\begin{proof}
On verifies the theorem by direct computation. This is left to the reader.
\end{proof}

We call the quasitriangular Hopf monad $d_A$ the \emph{central double of $A$}.

\begin{rem}\label{rem-not-rep-dA}
From Corollary~\ref{prop-EX}, we see that the Hopf monad $d_A$ is not representable by a Hopf algebra in general. This can also be verified by hand, showing that, with the same category $\bb_n$ and algebra $A_n$ as in Section~\ref{sect-non-rep-res}, the Hopf monad $d_{A_n}$ admits no augmentation.
\end{rem}

\section{Cross products and cross quotients of Hopf monads}\label{sect-cp-cq}
In this section, we study the relationships between the double $D(A)$ and the central double $d_A$ of a Hopf algebra $A$. The tools used to this end are the cross product of Hopf monads (see \cite{BV3}) and the inverse operation, called the cross quotient (see \cite{BLV}). We refer to \cite{ML1} for detailed definitions of adjoint functors and adjunctions.

\subsection{Hopf monads and adjunctions}
The forgetful functor $U_T\co\cc^T \to \cc$ associated with a monad $T$ on a category $\cc$ has a left adjoint $F_T \co \cc \to \cc^T$, called the \emph{free module functor}, defined by $$F_T(X)=(T(X),\mu_X) \quad \text{and} \quad F_T(f)=T(f).$$
Conversely, let $(F \co \cc \to \dd, U \co \dd \to \cc)$ be an adjunction, with unit $\eta \co 1_\cc \to UF$ and counit $\varepsilon \co FU \to 1_\dd$. Then $T=UF$ is a monad with product $\mu=U(\varepsilon_F)$ and unit $\eta$.
There exists a unique functor $K \co \dd \to \mo{T}{\cc}$ such that $U_TK=U$ and $KF=F_T$. This functor $K$,  called
the \emph{comparison functor} of the adjunction $(F,U)$, is defined by $K(d)=(Ud, U\varepsilon_d)$.

An adjunction $(F,U)$ is \emph{monadic} if its comparison functor $K$ is an equivalence of categories.
For example, if $T$ is a monad on $\cc$, the adjunction $(F_T,U_T)$ has monad $T$ and comparison functor $K=1_{\mo{T}{\cc}}$, and so is monadic.
%A functor $U$ is \emph{monadic} if it admits a left adjoint $F$ and the adjunction $(F,U)$ is monadic.
%If such is the case, the monad $T=UF$ of the adjunction $(F,U)$ is called \emph{the
%monad of $U$}. It is well-defined up to unique isomorphism of monads (as the left adjoint $F$ is unique up to unique
%natural isomorphism).

A \emph{comonoidal adjunction} is an adjunction
$(F\co \cc \to \dd, U\co \dd \to \cc)$, where $\cc$ and $\dd$ are monoidal categories, $F$ and $U$ are comonoidal functors, and the adjunction unit $\eta \co 1_\cc \to UF$ and counit $\varepsilon \co FU\to 1_\dd$ are comonoidal natural transformations.
Note  that if $(F,U)$ is a comonoidal adjunction, then $U$ is in fact a strong comonoidal functor, which we may view as a strong monoidal functor. Conversely, if a strong monoidal functor $U \co \dd \to \cc$ admits a left adjoint $F$, then $F$ is comonoidal,
with comonoidal structure given by
\begin{equation*}
F_2(X,Y)=\varepsilon_{FX \otimes FY} FU_2(FX,FY)F(\eta_X \otimes \eta_Y) \quad \text{and} \quad
F_0= \varepsilon_\un F(U_0),
\end{equation*}
and $(F,U)$ is a comonoidal adjunction (viewing $U$ as a strong comonoidal functor).

For example, the adjunction $(F_U,U_T)$ of a bimonad $T$ is a comonoidal adjunction (because $U_T$ is strong monoidal).
Conversely, the monad $T=UF$ of a comonoidal adjunction $(U,F)$ is a bimonad, and the comparison functor $K \co \dd \to \mo{T}{\cc}$ is strong monoidal and satisfies $U_T K=U$ as monoidal functors and  $KF=F_T$ as comonoidal functors (see  for instance  \cite[Theorem~2.6]{BV2}).

Let $(F\co \cc \to \dd, U\co \dd \to \cc)$ be a comonoidal adjunction between monoidal categories. The \emph{left Hopf operator} and the \emph{right Hopf operator} of $(F,U)$ are the natural transformations $\FO^l=\{\FO^l_{c,d}\}_{c\in \cc,d\in\dd}$ and $\FO^r=\{\FO^r_{d,c}\}_{d\in\dd,c\in \cc}$ defined by
\begin{align*}
&\FO^l_{c,d}=(\id_{F(c)}\otimes \varepsilon_d) F_2(c,U(d))\co F(c \otimes U(d)) \to F(c) \otimes d, \\
& \FO^r_{d,c} =(\varepsilon_d \otimes  \id_{F(c)} ) F_2(U(d),c)\co F(U(d) \otimes c) \to d \otimes F(c).
\end{align*}

A \emph{Hopf adjunction} is a  comonoidal adjunction whose left and right Hopf operators are invertible.
The monad of a Hopf adjunction is a Hopf monad. By \cite[Theore~~2.15]{BLV}, a bimonad is a Hopf monad if and only if its associated comonoidal adjunction is a Hopf adjunction.

The composite $(G,V)\circ (F,U)=(GF,UV)$ of two (comonoidal, Hopf) adjunctions is a (comonoidal, Hopf)  adjunction.

\subsection{Cross products}
Let $T$ be a monad on a category $\cc$. If $Q$ is a monad on the category $\mo{T}{\cc}$ of $T$\ti modules:
$$\mo{Q}{\bigl(\mo{T}{\cc}\bigr)}\adjunct{U_Q}{F_Q}\mo{T}{\cc}\adjunct{U_T}{F_T}\cc,$$
then the monad of the composite adjunction $(F_QF_T,U_TU_Q)$
is called the \emph{cross product of $T$ by $Q$} and denoted by $Q \cp T$ (see \cite[Section 3.7]{BV3}).  As an endofunctor of~$\cc$, $Q \cp T=U_TQF_T$. The product $p$ and unit $e$ of $Q \cp T$ are:
\begin{equation*}
p=q_{F_T} Q(\varepsilon_{Q F_T})  \quad \text{and}\quad e=v_{F_T}\eta,
\end{equation*}
where $q$ and $v$ are the product and the unit of $Q$, and $\eta$ and $\varepsilon$ are the unit and counit of the adjunction $(F_T,U_T)$. Note that the comparison functor of the composite adjunction $(F_QF_T,U_TU_Q)$  is a functor $$K \co (\mo{T}{\cc})^Q \to \mo{Q\cp T}{\cc}.$$
By \cite{BaWe}, if $Q$ preserves reflexive coequalizers, then $K$ is an isomorphism of categories.

%Note that the composition of two monadic functors is not monadic in general. However:
%%$U \co \dd \to \cc$ and $V\co \ee \to \dd$ are monadic, $UV$ is not monadic in general. However:
%%However, by Beck's Monadicity Theorem, $UV$ is monadic if $V$ is such that any pair of morphisms in $\ee$ whose image by $V$ has a coequalizer, has a %coequalizer and $V$ preserves those coequalizers.
%
%
%
%\begin{prop}\label{prop-cp-monadic}
%If $T$ is a monad on a category $\cc$ and $Q$ is a monad on $\mo{T}{\cc}$ which preserves reflexive coequalizers, then the forgetful functor $U_T U_Q$ is monadic with monad $Q \cp T$. Moreover the comparison functor $$K \co (\mo{T}{\cc})^Q \to \mo{Q\cp T}{\cc}$$ is an isomorphism of categories.
%\end{prop}

%\begin{proof} Results from  Beck's theorem (see Theorem~\ref{thm-beck}) and Lemma~\ref{lem-pres-creat}.
%\end{proof}

If $T$ is a bimonad on a monoidal category $\cc$ and $Q$ is a bimonad on $\mo{T}{\cc}$, then  $Q\cp T=U_TQF_T$ is a bimonad on $\cc$ (since a composition of comonoidal adjunctions is a comonoidal adjunction), with comonoidal structure given by:
\begin{align*}
& (Q\cp  T)_2(X,Y)=Q_2\bigl(F_T(X),F_T(Y)\bigr)\, Q\bigl((F_T)_2(X,Y)\bigr),\\
& (Q\cp  T)_0=Q_0\, Q\bigl((F_T)_0\bigl).
\end{align*}
In that case the comparison functor $K \co (\mo{T}{\cc})^Q \to \mo{Q\cp T}{\cc}$
is strict monoidal.

By \cite[Proposition~4.4]{BLV}, the cross product of two Hopf monads is a Hopf monad.

\begin{exa}
Let $H$ be a bialgebra over a field $\kk$ and $A$ be a $H$-module algebra, that is, an algebra in the monoidal category $\lMod{H}$ of left $H$\ti modules. In this situation, we may form the cross product $A \rtimes H$, which is a $\kk$-algebra (see \cite{Maj2}). Recall that $H \otimes ?$ is a monad on $\Vect_\kk$ and $A \otimes ?$ is a monad on $\lMod{H}$. Then:
\begin{equation*}
(A \otimes ?)\cp (H \otimes ?)=(A\rtimes H) \otimes ?
\end{equation*}
as monads. Moreover, if $H$ is a quasitriangular Hopf algebra and $A$ is a $H$-module Hopf algebra, that is, a Hopf algebra in the braided category $\lMod{H}$, then $A\rtimes H$ is a Hopf algebra over $\kk$, and
$(A \otimes ?)\cp (H \otimes ?)=(A\rtimes H) \otimes ?$ as Hopf monads.
\end{exa}

\subsection{Cross quotients}\label{sect-croco}
Let $f \co T \to P$ be a morphism of monads on a category $\cc$. We say that $f$ is \emph{cross quotientable} if the functor $f^* \co \mo{P}{\cc} \to \mo{T}{\cc}$ is monadic. In that case, the monad of $f^*$ (on $\mo{T}{\cc}$) is called the \emph{cross quotient} of $f$ and is denoted by $P \cdiv_f T$ or simply $P \cdiv T$. Note that the comparison functor $K \co  \cc^P \to (\cc^T)^{P \cdiv T}$
%$$ \xymatrix@R=1.4em@C=.8em{\cc^P \ar[rr]^-K \ar[dr]_-{f^*}& & (\cc^T)^{P \cdiv T} \ar[dl]^{U_{P \cdiv T}} \\
%&\cc^T&\\
%}$$
is then an isomorphism of categories. % (by the last assertion of Theorem~\ref{thm-beck}).

By \cite[Remark~4.10]{BLV}, a morphism $f \co T \to P$ of monads on $\cc$  is cross quotientable whenever $\cc$ admits coequalizers of reflexive pairs and $P$ preserve them.
%\end{rem}

A cross quotient of bimonads is a bimonad: let $f \co T \to P$ be a cross quotientable morphism of bimonads on a monoidal category~$\cc$. Then $P \cdiv_f T$ is a bimonad on $\cc^T$ and the comparison functor
$K \co \mo{P}{\cc} \to (\mo{T}{\cc})^{P \cdiv_f T}$ is an isomorphism of monoidal categories.
%\end{rem}

The cross quotient is inverse to the cross product in the following sense: let $T$ be a (bi)monad on a (monoidal) category $\cc$.
If $T \to P$ is a cross quotientable morphism of (bi)monads on $\cc$, then $$(P \cdiv T) \cp T\simeq P$$ as (bi)monads.
Also, if $Q$ be a (bi)monad on $\mo{T}{\cc}$ such that $(F_QF_T,U_TU_Q)$ is monadic, then the unit  of $Q$ defines a cross quotientable morphism of (bi)monads $T \to Q\cp T$ and $$(Q\cp T)\cdiv T \simeq Q$$ as (bi)monads.

If $\cc$ is a monoidal category admitting reflexive coequalizers, and whose monoidal product preserves reflexive coequalizers, and if $T$ and $P$ are two Hopf monads on $\cc$ which preserve reflexive coequalizers, then
any morphism of bimonads $T \to P$  is cross quotientable and  $P \cdiv T$ is a Hopf monad (see \cite[Proposition~4.13]{BLV}).

%\begin{prop}\label{prop-P/T-Hopf} Let $\cc$ be a monoidal category.  Assume that $\cc$ admits coequalizers and is left (resp.\@ right) coequalizer-preserving.
%Let $T$, $P$ be two left (resp.\@ right) Hopf monads on $\cc$, and let $f\co T \to P$ be a morphism of bimonads.
%Assume that $P$ and $T$ preserve coequalizers, so that the monad $P/T$ exists.
%Then $P/T$ is a left (resp.\@ right) Hopf monad.
%\end{prop}

%\begin{proof} Let us prove the left-handed version. The morphism $f$ is cross quotientable by Remark~\ref{rem-exist-P/T}, and so $P\simeq (P\cdiv_f T) \cp T$ as bimonads.
%The monoidal products of $\mo{T}{\cc}$ and $\mo{P}{\cc}$ preserve reflexive coequalizers in the left variable by Lemma~\ref{lem-tens-pres-coeq}.
%Applying Lemma~\ref{lem-Q-Hopf} to the bimonads $T$ and $P\cdiv T$, we get that $P\cdiv T$ is a left Hopf monad.
%\end{proof}

\begin{exa}
Let $f \co L \to H$ be a morphism of Hopf algebras over a field $\kk$, so that $H$ becomes a $L$\ti bimodule by setting $\ell \cdot h \cdot \ell'=f(\ell)hf(\ell')$.  The morphism $f$ induces a morphism of Hopf monads on
$\Vect_\kk$:
 $$f \otimes_\kk ? \co L \otimes_\kk ? \to H \otimes_\kk ?$$
  which is cross quotientable,  and $(H \otimes ?)\cdiv (L \otimes ?)$  is a \kt linear Hopf monad on the monoidal category $\lMod{L}$ given by $N\mapsto H \otimes_L N$. (Note that in general this cross quotient is not representable by a Hopf algebra in the center of the category of left $L$\ti modules).
  This construction defines an equivalence of categories
$$L \backslash \HopfAlg_\kk \to \HopfMon_\kk(\lMod{L}),$$
  where $L \backslash\HopfAlg_\kk$ is the category of Hopf \kt algebras under $L$ and $\HopfMon_\kk(\lMod{L})$ is the category of \kt linear Hopf monads on $\lMod{L}$.
\end{exa}

%\begin{rem}\label{rem-funct-crossquot}
%The cross quotient $P \cdiv_f T$ is functorial in $P$: let $f\co T \to P$ and $g\co P \to P'$ be morphisms of (bi)monads on a (monoidal) category $\cc$ such that $f$ and $gf$ admit cross quotients. Then there exists a unique morphism of (bi)monads $g \cdiv T \co P \cdiv_f T \to P' \cdiv_{gf} T$ such that the square
%$$\xymatrix@R=1.5em @C=4.5em{
%(\mo{T}{\cc})^{P' {\cdiv}_{gf} T} \ar[r]^{(g \cdiv T)^*}  & (\mo{T}{\cc})^{P {\cdiv}_f T}\\
%\mo{P'}{\cc} \ar[u]^{K_{gf}}\ar[r]_{g^*}& \mo{P}{\cc} \ar[u]_{K_f}
%}$$ commutes, where $K_f$ and $K_{gf}$ are the comparison isomorphisms.
%\end{rem}

%If $\cc$ is monoidal and $T$, $Q$ are bimonads, then
%$Q\cp T$ is a bimonad, and if $T$, $Q$ are Hopf monads, $Q\cp T$ is a Hopf monad (see Proposition~\ref{prop-cp-hopf}  below).

\subsection{Applications to the doubles of $A$}
Let $A$ be a Hopf algebra in a braided rigid category admitting a coend $C$.
Recall from Theorems~\ref{thm-double-DA} and \ref{thm-centraldouble} that the double $D(A)$ of $A$ is a quasitriangular Hopf algebra in $\bb$ and that the central double $d_A$ of $A$ is a quasitriangular Hopf monad on $\zz(\bb)$ such that
$$
\zz(\bb_A) \simeq \bb_{D(A)} \simeq \zz(\bb)^{d_A}.
$$
Recall the quasitriangular Hopf algebras $D(A)$ and $C$ may be viewed as quasitriangular Hopf monads $? \otimes D(A)$ and $? \otimes C$ on $\bb$.

\begin{thm}\label{thm-crossDA}
We have: $$? \otimes D(A)=d_A \rtimes (? \otimes C)$$ as quasitriangular Hopf monads. In particular $$d_A=D(A)\cdiv_f C,$$ where $f\co C \to D(A)$ is the Hopf algebra morphism defined by $f=\id_{A \otimes \ldual{A}} \otimes u$ with $u$ the unit of $C$.
\end{thm}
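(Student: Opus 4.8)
The plan is to establish the equality $? \otimes D(A)=d_A \rtimes (? \otimes C)$ at the level of the underlying adjunctions, and then to read off the cross-quotient formula from the general inverse relationship between cross products and cross quotients recalled in Section~\ref{sect-cp-cq}. By Example~\ref{ex-Cqt}, the braided strict monoidal isomorphism $I\co \bb_C \iso \zz(\bb)$ identifies $\zz(\bb)$ with the category $\bb^{? \otimes C}$ of modules over the Hopf monad $? \otimes C$, compatibly with the forgetful functors to $\bb$. Since $d_A$ is a Hopf monad on $\zz(\bb)=\bb^{? \otimes C}$, we obtain a composite of Hopf adjunctions
$$
\zz(\bb)^{d_A} \adjunct{U_{d_A}}{F_{d_A}} \zz(\bb)=\bb^{? \otimes C} \adjunct{U_{? \otimes C}}{F_{? \otimes C}} \bb,
$$
whose monad is, by definition, the cross product $d_A \rtimes (? \otimes C)$. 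So the theorem reduces to identifying this composite adjunction, as an adjunction over $\bb$ respecting the monoidal and braided structures, with the free/forgetful adjunction of the Hopf algebra $D(A)$.

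The key step is to manufacture this identification from the isomorphisms already at hand. The braided strict monoidal isomorphism $\Psi\co\zz(\bb)^{d_A} \iso \zz(\bb_A)$ of Theorem~\ref{thm-centraldouble}, which satisfies $\uu \Psi=U_{d_A}$, together with the braided isomorphism $\zz(\bb_A) \simeq \bb_{D(A)}$ of Theorem~\ref{thm-double-DA}, yields a chain of braided monoidal isomorphisms
$$
\bb^{? \otimes D(A)}=\bb_{D(A)} \simeq \zz(\bb_A) \xrightarrow{\Psi^{-1}} \zz(\bb)^{d_A}.
$$
I would check that every arrow in this chain commutes with the forgetful functor down to $\bb$: for $\Psi$ this is the commuting triangle of Theorem~\ref{thm-centraldouble} combined with the fact that $\uu$ followed by the forgetful functor $\zz(\bb)\to\bb$ sends $\bigl((M,r),\gamma\bigr)$ to its underlying object $M$; for $\zz(\bb_A)\simeq \bb_{D(A)}$ it is the preservation of underlying objects of modules. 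As the chain is then an isomorphism over $\bb$, i.e. it intertwines the two forgetful functors to $\bb$, it also intertwines their left adjoints; since a monad is recovered from its adjunction as $U F$, the two monads $? \otimes D(A)$ and $d_A \rtimes (? \otimes C)$ are thereby identified, and as the chain is moreover monoidal and braided this identification is one of quasitriangular Hopf monads. This gives $? \otimes D(A)=d_A \rtimes (? \otimes C)$, the transposition of the tensor factors $C$ and $A \otimes \ldual{A}$ in the underlying functor being absorbed by the canonical braiding isomorphism $\tau_{C,A \otimes \ldual{A}}$.

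For the cross-quotient statement I would invoke the inverse relationship of Section~\ref{sect-cp-cq}: the unit of $d_A$ determines a cross-quotientable morphism of Hopf monads $\iota\co ? \otimes C \to d_A \rtimes (? \otimes C)=? \otimes D(A)$ with $(d_A \rtimes (? \otimes C)) \cdiv (? \otimes C) \simeq d_A$. By Theorem~\ref{thm-rep-HopfMon} and Corollary~\ref{cor-rep-braided}, morphisms between the representable Hopf monads $? \otimes C$ and $? \otimes D(A)$ correspond bijectively to Hopf algebra morphisms $C \to D(A)$. Since $\iota$ is built from the unit $\eta$ of $d_A$, which inserts the unit of $A \otimes \ldual{A}$, the corresponding Hopf algebra morphism is the canonical embedding $f\co C \to D(A)$ of $C$ as the coend tensor-factor. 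Hence $d_A=D(A)\cdiv_f C$.

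I expect the main obstacle to lie in the bookkeeping of the key step: verifying that each isomorphism in the chain is genuinely an isomorphism of adjunctions over $\bb$—not merely an abstract equivalence of module categories—and is simultaneously monoidal and braided, so that the conclusion holds for quasitriangular Hopf monads rather than for bare monads. A concrete alternative, in the spirit of the proof of Theorem~\ref{thm-centraldouble}, is to compute the product, unit, comonoidal structure, and \Rt matrix of $d_A \rtimes (? \otimes C)$ from the cross-product formulas $p=q_{F_T}\,Q(\varepsilon_{QF_T})$ and $e=v_{F_T}\eta$ together with the explicit data of Figures~\ref{fig-DA} and~\ref{fig-mondA}, and to match them termwise with the structural morphisms of $D(A)$.
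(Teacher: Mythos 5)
Your proposal is correct and follows essentially the same route as the paper: the paper's proof simply observes that $?\otimes C$, $?\otimes D(A)$, and $d_A$ are the monads of the monadic functors $U\co\zz(\bb)\to\bb$, $U'\co\zz(\bb_A)\to\bb$, and $\uu\co\zz(\bb_A)\to\zz(\bb)$ (via Theorems~\ref{thm-double-DA} and~\ref{thm-centraldouble}), and that $U'=U\circ\uu$, which is exactly your identification of the composite adjunction, with the cross-quotient statement read off from the inverse relationship of Section~\ref{sect-cp-cq}. Your extra bookkeeping (checking the chain of isomorphisms lives over $\bb$ and is braided monoidal) is the implicit content of the paper's one-line argument, so the two proofs coincide in substance.
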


\begin{proof}
The quasitriangular Hopf monads $? \otimes C$, $? \otimes D(A)$,  and $d_A$ are respectively the monads of the forgetful functor $U\co\co\zz(\bb) \to \bb$, the forgetful functor $U'\co\zz(\bb_A) \to \bb$, and the functor $\uu \co \zz(\bb_A) \to \zz(\bb)$. Since $U' = U \circ \uu$, we obtain that $? \otimes D(A)$ is the cross-product of $d_A$ by $? \otimes C$. This can be restated by saying that $d_A$ is the cross-quotient of $? \otimes D(A)$ by $? \otimes C$ along $? \otimes f$, or in short, $d_A = D(A) \cdiv_f C$.
\end{proof}

Recall from Remark~\ref{rem-not-rep-dA}, that the Hopf monad $d_A$ is not in general representable by a Hopf algebra. In particular Theorem~\ref{thm-crossDA} gives an illustration of the fact the cross-quotient of two Hopf monads representable by Hopf algebras is not always representable by a Hopf algebra. This may be explained by the fact that $C$ is not in general a retract of $D(A)$. Indeed, by \cite[Corollary~5.12]{BLV}, the cross-quotient $P \cdiv T$ of two Hopf monads is representable by a Hopf algebra in the center of the category of $T$-modules if and only if $T$ is a retract of $P$.

\end{document}